\def\mcc{M\raise.5ex\hbox{c}C}
\def\mccarthy{M\raise.5ex\hbox{c}Carthy}
\def\eg{{\it e.g. }}
\def\h{{\cal H}}
\def\N{{\cal N}}
\def\m{Mult}
\def\={\ = \ }
\def\C{\mathbb C}
\def\be{\setcounter{equation}{\value{theorem}} \begin{equation}}
\def\ee{\end{equation} \addtocounter{theorem}{1}}
\def\beq{\begin{eqnarray*}}
\def\eeq{\end{eqnarray*}}
\def\bp{{\sc Proof: }}
\def\ep{{}{\hfill $\Box$} \vskip 5pt \par}
\def\bl{\begin{lemma}}
\def\el{\end{lemma}}
\def\bt{\begin{theorem}}
\def\et{\end{theorem}}
\def\bprop{\begin{prop}}
\def\eprop{\end{prop}}
\def\bd{\begin{definition}}
\def\ed{\end{definition}}
\def\br{\begin{remark}}
\def\er{\end{remark}}
\def\bexer{\begin{exercise}}
\def\eexer{\end{exercise}}
\def\bfig{\begin{figure}}
\def\efig{\end{figure}}
\numberwithin{equation}{section}
\title{Wandering Montel Theorems for Hilbert Space Valued Holomorphic Functions}
\author{Jim Agler
\thanks{Partially supported by National Science Foundation Grant
DMS 1361720}
\\ U.C. San Diego\\ La Jolla, CA 92093
\and
John E. M\raise.5ex\hbox{c}Carthy
\thanks{Partially supported by National Science Foundation Grant  
DMS 1565243
}
\\ Washington University\\ St. Louis, MO 63130
}
\definecolor{dark_purple}{rgb}{0.4, 0.0, 0.4}
\def\d{\mathbb{D}}
\def\c{\mathbb{C}}
\def\bbm{\mathbb{M}}
\def\N{\mathbb{N}}
\def\m{\mathcal{M}}
\def\n{\mathcal{N}}
\def\h{\mathcal{H}}
\def \b{\mathcal{B}}
\def \x{\mathcal{X}}
\def\calc{\mathcal{C}}
\def\calp{\mathcal{P}}
\def\ip#1#2{\langle #1,#2 \rangle}
\def\be{\begin{equation}}
\def\ee{\end{equation}}
\newcommand\de{\delta}
\renewcommand\bd{B_\delta}
\renewcommand\calc{{\mathcal C}}
\newcommand\md{\bbm^d}
\def\set#1#2{\{ #1 \, : \, #2\}}
\def\spn{{\rm span\,}}
\def\norm#1{\| #1 \|}
\def\s0{s_0}
\def\p0{p_0}
\DeclareMathOperator{\hol}{Hol}
\DeclareMathOperator{\her}{Her}
\DeclareMathOperator{\id}{id}
\newcommand{\tensor}[2]{\text{ }{\begin{smallmatrix} #1 \\ \otimes\\ #2\end{smallmatrix}}\text{  }}
\renewcommand\O{\Omega}
\newcommand\holnc{{\rm Hol}^{\rm nc}_\h(\Omega)}
\begin{document}
\maketitle



\bibliographystyle{plain}
\allowdisplaybreaks

\theoremstyle{definition}
\newtheorem{defin}[equation]{Definition}
\newtheorem{lem}[equation]{Lemma}
\newtheorem{cor}[equation]{Corollary}
\newtheorem{prop}[equation]{Proposition}
\newtheorem{thm}[equation]{Theorem}
\newtheorem{claim}[equation]{Claim}
\newtheorem{ques}[equation]{Question}
\newtheorem{prob}[equation]{Problem}
\newtheorem{fact}[equation]{Fact}
\newtheorem{rem}[equation]{Remark}
\newtheorem{rems}[equation]{Remarks}
\newtheorem{notation}[equation]{Notation}
\newtheorem{exam}[equation]{Example}
\newtheorem{con}[equation]{Conjecture}

\section{Introduction}
\label{seca}

\subsection{Commutative Theory}

Let $\Omega$ be an open set in $\c^d$ and assume that $\{u^k\}$ is a sequence in $\hol(\Omega)$, the algebra of of holomorphic functions on $\Omega$ equipped with the topology of uniform convergence on compact subsets.
The classical Montel Theorem asserts that if $\{u^k\}$ is locally uniformly bounded on $\Omega$, then there exists a subsequence $\{u^{k_l}\}$ that converges in $\hol(\Omega)$.

It is well known that if $\x$ is an infinite dimensional Banach space, then Montel's Theorem 
breaks down for 
$\hol_\x(\Omega)$, the space of
$\x$-valued holomorphic functions, see \eg \cite{an00,kk03}.
For example, if $\x=\ell^2$ and $\{f^k\}$ is a locally uniformly bounded sequence of holomorphic functions on $\Omega$, then the sequence
\[
\begin{pmatrix}f^1(\lambda)\\0 \\0\\ \vdots\end{pmatrix},\
\begin{pmatrix}0\\ f^2(\lambda)\\0 \\ \vdots\end{pmatrix},\
\begin{pmatrix}0\\ 0\\ f^3(\lambda)\\ \vdots\end{pmatrix}, \ldots
\]
is a locally uniformly bounded sequence that will have a convergent subsequence only if there exists a subsequence $\{f^{k_l}\}$ that converges uniformly to 0 on $\Omega$.

Observe that the problem in the example given above is that while for all $\lambda\in \Omega$, $u^k$ converges weakly to 0, it needn't be the case that $u^k(\lambda)$ converges in norm for any $\lambda \in \Omega$.
 However, just as in the case of the classical proof of Montel's Theorem that uses the Arzela-Ascoli Theorem, if one assumes that $\{u^k\}$ is well behaved pointwise on a large enough set, then one \emph{can} conclude uniform convergence in norm on compact sets.
For example, consider the following  theorem by Arendt and Nikolski \cite[Cor. 2.3]{an00}
\begin{thm}
\label{thman}
Let $\O$ be an open connected set in $\C$, and let $u^k$ be a sequence in 
$\hol_\x(\Omega)$
 that is locally bounded. Assume that $$\O_0 := \{ z \in \O: \{ u^k(z) : k \in \N \} \ \text{
is\ relatively\ compact\ in\ }X \} $$ has an accumulation point in $\O$. Then there exists a subsequence which converges to a holomorphic function uniformly on compact subsets of $\O$.
\end{thm}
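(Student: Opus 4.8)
Since $\O_0$ has an accumulation point in $\O$, I would first choose distinct points $z_n\in\O_0$ with $z_n\to z_0\in\O$. For each $n$ the set $\set{u^k(z_n)}{k\in\N}$ is relatively compact in $\x$, so a diagonal argument yields a subsequence — which I continue to denote $\{u^k\}$ — along which $u^k(z_n)$ converges in norm for every $n$. This reduces the statement to a vector-valued Vitali theorem: a locally bounded sequence in $\hol_\x(\O)$ that converges in norm on a set with an accumulation point in $\O$ converges uniformly on compact subsets; the limit then automatically lies in $\hol_\x(\O)$, being a locally uniform limit of $\x$-valued holomorphic functions.

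To prove this I would first work near $z_0$. Fix a closed disc $\overline{D(z_0,R)}\subseteq\O$ with $\norm{u^k}\le M$ on it for all $k$, and, after discarding finitely many $z_n$ and reindexing, assume every $z_n$ lies in $D(z_0,R/10)$. For a fixed $N$, put $q(z)=\prod_{n=1}^N(z-z_n)$ and let $L^{k,m}$ be the $\x$-valued Lagrange interpolant of $v^{k,m}:=u^k-u^m$ at $z_1,\dots,z_N$; then $w^{k,m}:=(v^{k,m}-L^{k,m})/q$ extends holomorphically to $D(z_0,R)$. On $|\zeta-z_0|=R$ one has $|q(\zeta)|\ge(9R/10)^N$, $\norm{v^{k,m}(\zeta)}\le 2M$, and $\norm{L^{k,m}(\zeta)}\le A_N\eta_{k,m}$, where $\eta_{k,m}:=\max_{n\le N}\norm{u^k(z_n)-u^m(z_n)}$ and $A_N:=\sup_{|z-z_0|\le R}\sum_{n=1}^N|\ell_n(z)|<\infty$, the $\ell_n$ being the Lagrange basis polynomials for the nodes $z_1,\dots,z_N$. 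Applying the scalar maximum modulus principle to $\langle w^{k,m}(\cdot),\phi\rangle$ and taking the supremum over $\norm\phi\le 1$ gives $\norm{w^{k,m}}\le(2M+A_N\eta_{k,m})(9R/10)^{-N}$ on the whole disc, hence on $|z-z_0|\le R/2$ (where $|q(z)|\le(6R/10)^N$)
\[
\norm{v^{k,m}(z)}\ \le\ A_N\eta_{k,m}\ +\ (2M+A_N\eta_{k,m})\,(2/3)^N .
\]
Given $\vare>0$, I would pick $N$ so large that $2M(2/3)^N<\vare/3$, and then — each $\{u^k(z_n)\}_k$ being Cauchy and only finitely many nodes being involved — pick $P$ with $A_N\eta_{k,m}<\vare/3$ for $k,m\ge P$; this makes $\{u^k\}$ uniformly Cauchy, hence (by completeness of $\x$) uniformly convergent, on $\overline{D(z_0,R/2)}$.

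To spread this over $\O$, let $W$ be the set of points of $\O$ having a neighbourhood on which $\{u^k\}$ converges uniformly. Then $W$ is open and, by the previous step, nonempty; and $W$ is closed in $\O$, because at any $z_*\in\overline W\cap\O$ I can take $z'\in W$ very close to $z_*$, observe that $\{u^k\}$ converges uniformly on a small closed disc about $z'$ lying inside $D(z_*,R^*/10)$ for a suitable $R^*$ with $\overline{D(z_*,R^*)}\subseteq\O$, and rerun the interpolation argument with $z_*$ in place of $z_0$ and nodes drawn from that small disc. Connectedness of $\O$ then gives $W=\O$, and a routine compactness argument upgrades this to uniform convergence on every compact subset. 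The main obstacle is the local step: local boundedness supplies only the non-decaying estimate $\norm{v^{k,m}}\le 2M$, so one must interpolate at many nodes clustered at $z_0$ and play the geometric factor $(2/3)^N$ from the ratio of disc radii against the ($N$-dependent but fixed) Lagrange constant $A_N$, sending $N\to\infty$ \emph{before} $k,m\to\infty$; arranging the quantifiers in this order is the delicate point. Relative compactness enters only in the opening diagonalization — mere local boundedness cannot suffice, as the sequence $u^k\equiv e_k$ from the Introduction's example (with $f^k\equiv 1$) has no convergent subsequence. Alternatively, instead of the last two steps one could apply the classical scalar Vitali theorem to each $\langle u^k(\cdot),\phi\rangle$, assemble a weak-$*$ limit $F:\O\to\x^{**}$, verify it is holomorphic and $\x$-valued, and recover norm convergence from local equicontinuity; I prefer to stay inside $\x$.
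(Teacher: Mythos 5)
Your argument is correct, and it is worth noting at the outset that the paper itself offers no proof of Theorem \ref{thman}: it is quoted from Arendt--Nikolski, and the paper's nearest proved relatives are Proposition \ref{int.prop.10} (which assumes the good points are \emph{dense} and uses only a Cauchy-estimate equicontinuity argument) and Proposition \ref{uniq.prop.20} (which handles a mere set of uniqueness by scalarizing to $\ip{u^k(\cdot)\alpha}{\beta}$, assembling a weak limit via Riesz representation, proving hereditary convergence of $u^k(\mu)^*u^k(\lambda)$, and upgrading weak to norm convergence through the Hilbert-space identity $\norm{u^k(\lambda)\alpha}^2 \to \norm{u(\lambda)\alpha}^2$). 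Your route is genuinely different: after the diagonal extraction you prove a quantitative Banach-valued Vitali theorem directly, by Lagrange interpolation at $N$ nodes clustered near $z_0$, dividing out $q(z)=\prod(z-z_n)$, and playing the geometric gain $(2/3)^N$ from the ratio of disc radii against the fixed-$N$ Lebesgue constant $A_N$; you correctly identify that choosing $N$ \emph{before} $k,m$ is the whole game, and the spreading over $\O$ by connectedness of the set $W$ of points of local uniform convergence is sound (re-running the interpolation at a boundary point of $W$ with nodes drawn from a nearby disc of $W$ is exactly what is needed). What each approach buys: yours is self-contained, elementary, and valid in an arbitrary Banach space $\x$ --- which matters, since Theorem \ref{thman} is a Banach-space statement while the paper's Section \ref{secf} machinery (the passage from weak convergence plus convergence of $u(\mu)^*u(\lambda)$ to norm convergence) is specifically Hilbertian; the paper's scalarization route, where it is available, is shorter because it outsources all the hard analysis to the classical scalar Montel theorem. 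The only cosmetic caveat is that your closing estimate on $\overline{D(z_0,R/2)}$ should be read with $\|v^{k,m}\| \le A_N\eta_{k,m}(1+(2/3)^N) + 2M(2/3)^N$, which is exactly what your three $\vare/3$'s absorb, so nothing is missing.
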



Theorem~\ref{thman} deals with the difficulty by making strong additional assumptions about the point-wise behavior of $\{u^k\}$, assumptions that may not hold in desirable applications. The central idea of this paper, for Hilbert space valued functions, is instead to use a sequence of unitaries to push (most of) the range of the functions into a finite-dimensional space. 
Here is our first main result.

\begin{thm}\label{int.thm.10}
If $\Omega$ is an open set in $\c^d$, $\h$ is a Hilbert space, and $\{u^k\}$ is  a locally uniformly bounded sequence in $\hol_\h(\Omega)$, then there exists a sequence $\{U^k\}$
of unitary operators on $\h$ such that $\{U^k u^{k}\}$ has a subsequence that converges in $\hol_\h(\Omega)$.
\end{thm}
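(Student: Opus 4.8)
The plan is to combine the classical scalar Montel theorem with a diagonalization over a countable family of coordinates, using the unitaries to realign the ranges at each stage. First I would fix an exhaustion $K_1 \subset K_2 \subset \cdots$ of $\Omega$ by compact sets and a countable dense set $\{\lambda_j\}$ in $\Omega$. For each $k$, the set $\overline{\spn}\{u^k(\lambda) : \lambda \in \Omega\}$ is a \emph{separable} subspace of $\h$ (since $\Omega$ is separable and $u^k$ is continuous), so after replacing $\h$ by a separable subspace — and extending the unitaries arbitrarily on the orthogonal complement — we may assume $\h$ is separable with orthonormal basis $\{e_n\}_{n \ge 1}$.

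The heart of the argument is the following observation: for each fixed $k$, one can choose a unitary $V^k$ on $\h$ so that $V^k u^k(\lambda_j)$ is supported on the first $N_k$ coordinates for $j = 1, \dots, k$ and, more importantly, so that the "tails" of $V^k u^k$ beyond coordinate $N_k$ are uniformly small on $K_k$. The mechanism for this: the family $\{u^k(\lambda) : \lambda \in K_k\}$ is a norm-bounded continuous image of a compact set, hence norm-compact, hence \emph{approximately finite-dimensional} — there is a finite-dimensional subspace $F_k \subset \h$ with $\dim F_k = N_k$ such that $\dist(u^k(\lambda), F_k) < 1/k$ for all $\lambda \in K_k$. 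Choose $V^k$ unitary carrying $F_k$ into $\spn\{e_1, \dots, e_{N_k}\}$. Then writing $U^k u^k = w^k + r^k$ where $w^k = P_{N_k} U^k u^k$ is the compression to the first $N_k$ coordinates, we get $\sup_{K_k} \|r^k\| < 1/k$, so $r^k \to 0$ uniformly on compact subsets of $\Omega$.

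It remains to extract a subsequence along which $\{w^{k}\}$ converges in $\hol_\h(\Omega)$; since $r^k \to 0$, this gives convergence of $U^k u^k$ along that subsequence. Now $w^k$ takes values in the \emph{fixed} space $\ell^2(\N) = \h$ but is supported on finitely many (growing) coordinates. For each fixed coordinate index $n$, the scalar functions $\lambda \mapsto \langle w^k(\lambda), e_n \rangle = \langle U^k u^k(\lambda), e_n\rangle$ are holomorphic and bounded by $\sup_k \|u^k\|_{K}$ on each compact $K$, so by the scalar Montel theorem and a diagonal argument we may pass to a subsequence (still called $k$) so that $\langle w^k(\cdot), e_n\rangle \to g_n(\cdot)$ in $\hol(\Omega)$ for every $n$. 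One then shows the vector-valued function $g = \sum_n g_n e_n$ is well-defined, holomorphic, and that $w^k \to g$ locally uniformly in norm. The key point making this last step work — and the step I expect to be the main obstacle — is controlling the $\ell^2$-tail $\sum_{n > M} |\langle w^k(\lambda), e_n\rangle|^2$ uniformly in $k$ on compact sets; coordinatewise convergence alone does not give norm convergence (this is exactly the failure exhibited in the $\ell^2$-example in the introduction). The resolution is to note that the norm-compactness used to build $V^k$ can be made uniform across the subsequence: on a fixed compact $K \subset \Omega$, once $k$ is large, $K \subset K_k$ and $\sup_{K}\|r^k\| < 1/k$, while $w^k$ itself lies within $1/k$ of the norm-compact set $\overline{U^k u^k(K)}$; one pushes this through a careful accounting, using that the limit $g$ inherits a uniform tail bound and that $\|w^k(\lambda) - g(\lambda)\|^2 \le \sum_{n \le M}|\langle w^k - g, e_n\rangle|^2 + (\text{tails})$, choosing $M$ first to kill the tails uniformly and then $k$ large to kill the finite sum. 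This yields $U^k u^k \to g$ in $\hol_\h(\Omega)$ along the subsequence, completing the proof.
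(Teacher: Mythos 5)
There is a genuine gap, and it sits exactly at the step you yourself flag as ``the main obstacle.'' Your construction of $V^k$ only requires that the finite-dimensional approximant $F_k$ of $u^k(K_k)$ be carried into $\spn\{e_1,\dots,e_{N_k}\}$; it imposes no coordination between the placements for different $k$. Consequently the uniform-in-$k$ tail bound you need at the end --- that $\sup_k\sup_{\lambda\in K}\sum_{n>M}|\ip{w^k(\lambda)}{e_n}|^2$ can be made small by choosing $M$ \emph{before} $k$ --- does not follow: norm-compactness of each individual set $u^k(K)$ gives a tail bound for each fixed $k$, but with the cutoff depending on $k$, and $N_k\to\infty$. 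Concretely, take $u^k(\lambda)=\sum_{j=1}^k c_j f_j(\lambda)a_{k,j}$ with $\{a_{k,j}\}_j$ orthonormal, $\sum_j c_j^2<\infty$ and $|f_j|\le 1$; a perfectly legitimate choice of $V^k$ in your scheme sends $a_{k,j}$ to $e_{k+1-j}$, after which every coordinate function $\ip{w^k(\cdot)}{e_n}$ tends to $0$ while $\norm{w^k(\lambda)}$ does not, so this $\{w^k\}$ has no norm-convergent subsequence. The missing uniformity cannot be extracted afterwards from compactness of the individual ranges; it has to be built into the unitaries themselves.

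The paper does exactly that. It fixes the dense sequence $\{\lambda_i\}$ first, sets $\m^k_i=\spn\{u^k(\lambda_1),\dots,u^k(\lambda_i)\}$, and chooses $U^k$ so that the \emph{nested} family $\m^k_1\subseteq\m^k_2\subseteq\cdots\subseteq\m^k_k$ is carried into the \emph{fixed} nested family $\h^1\subseteq\h^2\subseteq\cdots$, where $\h^i=\spn\{e_1,\dots,e_i\}$. The payoff is that for each fixed $i$ the vectors $U^k u^k(\lambda_i)$, $k\ge i$, all lie in the single finite-dimensional space $\h^i$, so Bolzano--Weierstrass plus a diagonal argument gives norm convergence at every $\lambda_i$ along one subsequence, with no tail estimate ever needed. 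Local uniform convergence is then obtained not by tail control on compacta but by equicontinuity: Proposition \ref{int.prop.10} shows that pointwise norm convergence on a dense set together with local uniform boundedness already yields convergence in $\hol_\h(\Omega)$. If you add to your construction the requirement that $V^k$ map $\spn\{u^k(\lambda_1),\dots,u^k(\lambda_i)\}$ into $\spn\{e_1,\dots,e_i\}$ for every $i\le k$, your argument closes and the entire tail discussion becomes unnecessary.
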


We prove Theorem~\ref{int.thm.10} in Section~\ref{secb}.
In Sections~\ref{secc} and \ref{secd} we consider  versions for non-commutative functions.
These functions have been extensively studied recently---see \eg 
\cite{po06,bgm06,akv06,hkm11b,hm12,kvv14,pas14,bal15,hptv16,bmv16a}. Before stating our results,
we must spend a little time explaining some definitions.

\subsection{Non-commutative theory}

In commutative analysis, one studies holomorphic functions  defined on domains in $\c^d$. In noncommutative analysis one studies holomorphic functions defined on domains in $\bbm^d$, the \emph{$d$-dimensional nc universe}. For each $n$ we let $\bbm_n^d$ denote the set of $d$-tuples of $n\times n$ matrices. We then let
\[
\bbm^d =\bigcup_{n=1}^\infty \bbm_n^d.
 \]
When $E$ is a subset of $\bbm^d$, then for each $n$, we adopt the notation
\[
E_n = E \cap \bbm_n^d.
\]

In noncommutative analysis one studies \emph{graded} functions, i.e., functions $f$ defined on subsets $E$ of $\bbm^d$, that satisfy
 \be\label{hol.10}
 \forall_n\ \ \forall_{\lambda \in E_n}\ \
 f(\lambda) \in \bbm_n.
 \ee

 $\bbm^d$ carries a topology, the so-called \emph{finite topology}\footnote{Subsequently, we shall consider other topologies as well.},
 wherein a set $\Omega$ is deemed to be open precisely when
 \[
 \forall_n\ \ \Omega_n \text{ is open in } \bbm_n^d.
 \]
 With this definition, note that a graded function $f:E \to \bbm^1$ is finitely continuous if and only if $f|E_n$ is continuous for each $n$ and also that a set $K\subseteq \bbm^d$ is finitely compact if and only if there exists $n$ such that $E_m =\varnothing$ when $m>n$ and $E_m$ is compact when $m\le n$.

 If $\Omega$ is finitely open in $\bbm^d$, then for each $n$, $\Omega_n$ can be identified with an open set in $\c^{dn^2}$ in an obvious way.  
 If, in addition, $f$ is a graded function on $\Omega$, then we say that $f$ is \emph{holomorphic on $\Omega$} if for each $n$, $f|\Omega_n$ is a holomorphic mapping 
 of $\Omega_n$ into $\bbm_n$. We let $\hol(\Omega)$ denote the collection of graded holomorphic functions.
 
 It is also possible to consider $\h$-valued holomorphic functions in the noncommutative setting. One particularly concrete way to do this is to realize in the scalar case just considered that \eqref{hol.10} is equivalent to asserting that
\[
 \forall_n\ \ \forall_{\lambda \in E_n}\ \
 f(\lambda) \in \b(\c^n,\c^n).
\]
We therefore replace the former definition (that $f$ be graded) with the requirement that $f$ be a \emph{graded $\h$-valued function}, i.e., that
\[
 \forall_n\ \ \forall_{\lambda \in E_n}\ \
 f(\lambda) \in \b(\c^n,\c^n\otimes \h).
\]
Just as before, we declare a graded $\h$-valued function defined on a finitely open set $\Omega$ in $\bbm^d$ to be holomorphic if for each $n$, $f|\Omega_n$ is a holomorphic 
mapping of $\Omega_n$ into $\b(\c^n,\c^n\otimes \h)$. We let $\hol_\h(\Omega)$ denote the collection of graded $\h$-valued functions and view $\hol_\h(\Omega)$ as a
 complete metric space endowed with the
 topology of uniform convergence on finitely compact subsets of $\Omega$.
 
 A special class of graded functions arise by formalizing certain algebraic properties of free polynomials.
  If $E\subseteq \bbm^d $ we say that $E$ is an \emph{nc-set} if $E$ is closed with respect to direct sums.
  We define the class of \emph{nc-functions} as follows.
 \begin{defin}\label{nc.def.10}
Let $\h$ be a Hilbert space, $E$ an nc-set, and assume that $f$ is a function defined on $E$. We say that $f$ is an \emph{$\h$-valued nc-function on $E$} if the following conditions hold.
 \begin{enumerate}[(i)]
 \item $f$ is \emph{$\h$-graded}, i.e.,
 \[
 \forall_n\ \ \forall_{\lambda \in E\cap \bbm_n}\ \
 f(\lambda) \in \b(\c^n,\c^n \otimes \h).
 \]
 \item $f$ \emph{preserves direct sums}, i.e.,
 \[
\forall_{\lambda,\mu \in E}\ \ \lambda\oplus \mu \in E \implies  f(\lambda \oplus \mu) =f(\lambda) \oplus f(\mu).
 \]
 In this formula, if $\lambda \in E_m$ and $\mu\in E_m$, we identify $\c^m \oplus \c^n$ and $\c^{m+n}$ and identify $(\c^m\otimes \h) \oplus (\c^n \otimes \h)$ and $\c^{m+n} \otimes \h$.
 \item $f$ \emph{preserves similarity}, i.e.,
 \[
 f(S \lambda S^{-1}) =(S\otimes \id_\h) f(\lambda)S^{-1}
 \]
 whenever $n\ge 1$, $S \in \bbm_n$ is invertible, and both $\lambda$ and $S\lambda S^{-1}$ are in $E_n$.
 \end{enumerate}
 \end{defin}

 When $f:E \to \bbm^1 \otimes \h$ is an nc-function and $E$ is a finitely open nc-set then Condition (iii) above becomes very strong and yields
 the following proposition which lies at the heart of nc analysis (see \cite{hkm11b} or \cite[Thm. 7.2]{kvv14}).
 \begin{prop}\label{nc.prop.10}
 Let $\Omega$ be a finitely open nc-set. If $f$ is a bounded nc-function defined on $\Omega$, then $f$ is holomorphic on $\Omega$.
 \end{prop}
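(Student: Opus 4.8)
The plan is to reduce the statement, via a localization argument, to the classical fact that a function of several complex variables that is locally bounded and separately holomorphic (indeed, even just Gateaux-holomorphic) is holomorphic, and to extract the requisite analyticity from condition (iii) alone. Fix $n$; we must show that $f|\Omega_n$ is a holomorphic map of the open set $\Omega_n \subseteq \bbm_n^d \cong \c^{dn^2}$ into $\b(\c^n, \c^n\otimes\h)$. Since $\h$-valued (equivalently $\b(\c^n,\c^n\otimes\h)$-valued, as $n$ is fixed) holomorphy in finitely many complex variables can be checked weakly, it suffices to prove that for each $\xi\in\c^n$ and $\eta\in\c^n\otimes\h$ the scalar function $\lambda\mapsto \langle f(\lambda)\xi,\eta\rangle$ is holomorphic on $\Omega_n$; by Hartogs' theorem it is enough to show this function is holomorphic in each complex coordinate separately, and since it is bounded by hypothesis, it is enough to show it is locally bounded (given) and continuous, or directly that it has a complex derivative in each coordinate.

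The key step is to use similarity-invariance to produce derivatives. Fix $\lambda\in\Omega_n$ and a direction $\Delta\in\bbm_n^d$. For small scalar $t$, consider the invertible matrix $S_t = I + tN$ for a nilpotent $N\in\bbm_n$ chosen so that $S_t \lambda S_t^{-1}$ traces out, to first order in $t$, the curve $\lambda + t[\![N,\lambda]\!]$ where $[\![N,\lambda]\!]$ denotes the $d$-tuple of commutators $(N\lambda^{(1)} - \lambda^{(1)} N,\dots)$; more systematically, one enlarges $n$ and works on $\Omega_{2n}$ with the block matrix $\begin{pmatrix}\lambda & \Delta \\ 0 & \lambda\end{pmatrix}$, which for suitable $\Delta$ lies in $\Omega_{2n}$ since $\Omega$ is finitely open and $\lambda\oplus\lambda\in\Omega_{2n}$ by the nc-set property. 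Condition (iii) applied to conjugation of this $2\times 2$ block matrix by $\begin{pmatrix}I & tI \\ 0 & I\end{pmatrix}$, combined with condition (ii) evaluated at $\lambda\oplus\lambda$, forces a linear relation among $f$ evaluated at nearby points; rearranging, one finds that the difference quotient $\frac{1}{t}\big(f(\lambda + t\Delta) - f(\lambda)\big)$ — or the relevant off-diagonal block of $f$ of the perturbed matrix — is in fact \emph{independent of $t$} for all small $t\neq 0$, hence equals a fixed bounded operator $Df(\lambda)[\Delta]$. This is the standard nc-difference-quotient computation (as in \cite{kvv14}, Chapter 2), and it shows $f$ is Gateaux-differentiable in every direction at every point, with a uniformly (locally) bounded differential.

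Having local boundedness plus Gateaux differentiability in every direction on an open subset of a finite-dimensional space, holomorphy of $f|\Omega_n$ follows from the classical theorem (local boundedness upgrades weak/Gateaux holomorphy to Fréchet holomorphy; one may also just cite that a locally bounded function of $dn^2$ complex variables that is holomorphic in each variable separately is jointly holomorphic, Hartogs). Running this for every $n$ gives that $f\in\hol(\Omega)$ (or $\hol_\h(\Omega)$ in the vector-valued formulation).

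The main obstacle is the bookkeeping in the localization/difference-quotient step: one must verify that the block matrix $\begin{pmatrix}\lambda & t\Delta \\ 0 & \lambda\end{pmatrix}$ and its conjugates genuinely lie in $\Omega_{2n}$ for $t$ in a neighborhood of $0$ (which uses that $\Omega_{2n}$ is open and contains $\lambda\oplus\lambda$), and that conditions (ii) and (iii) together — not (iii) alone, since the conjugating similarities are the ones realizing direct-sum structure — pin down the off-diagonal block of $f$ as an honest linear, $t$-independent function of $\Delta$. The boundedness hypothesis is used exactly once but essentially, to pass from the formal algebraic existence of directional derivatives to actual complex-analytic regularity.
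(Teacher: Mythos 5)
The paper does not actually prove Proposition~\ref{nc.prop.10}; it cites \cite{hkm11b} and \cite[Thm.~7.2]{kvv14}, and the argument you sketch is a reconstruction of exactly that standard proof. Your framing is right: fix $n$, use the nc-set property and finite openness to place $\left(\begin{smallmatrix}\lambda & \Delta\\ 0&\lambda\end{smallmatrix}\right)$ in $\Omega_{2n}$ for small $\Delta$, and extract regularity from conditions (ii) and (iii) plus boundedness. But the crux of your derivative step contains a false claim: the difference quotient $\tfrac1t\bigl(f(\lambda+t\Delta)-f(\lambda)\bigr)$ is \emph{not} independent of $t$. Already for $d=n=1$ and $f(z)=z^2$ it equals $2\lambda+t$. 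What the similarity computation actually yields is the exact identity
\[
f(\lambda+t\Delta)-f(\lambda)\;=\;t\,\Delta f(\lambda+t\Delta,\lambda)[\Delta],
\]
where $\Delta f(\mu,\lambda)[\Delta]$ denotes the $(1,2)$ block of $f\left(\begin{smallmatrix}\mu&\Delta\\0&\lambda\end{smallmatrix}\right)$ (this follows from conjugating $\mu\oplus\lambda$ by $\left(\begin{smallmatrix}I&I\\0&I\end{smallmatrix}\right)$ and rescaling the off-diagonal entry by a diagonal similarity). The quotient therefore depends on $t$ through the first argument of $\Delta f$; boundedness tells you only that it is \emph{bounded} for small $t$, not that it converges, and ``locally bounded with bounded difference quotients'' does not give separate holomorphy, so the appeal to Hartogs cannot close this on its own.

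The gap is fillable within your framework, and the missing ingredient is continuity. First upgrade boundedness to a local Lipschitz estimate: conjugating $\left(\begin{smallmatrix}\mu&\lambda-\mu\\0&\lambda\end{smallmatrix}\right)$ by ${\rm diag}(I,\varepsilon I)$ with $\varepsilon=\norm{\lambda-\mu}/\delta$ keeps the point in a $\delta$-neighborhood of $\lambda\oplus\lambda$ in $\Omega_{2n}$, and reading off the $(1,2)$ block of $f$ there gives $\norm{f(\lambda)-f(\mu)}\le (M/\delta)\norm{\lambda-\mu}$, where $M$ bounds $f$ at level $2n$. Applying this same Lipschitz estimate at level $2n$ (using level $4n$) shows that $\mu\mapsto \Delta f(\mu,\lambda)[\Delta]$ is continuous at $\mu=\lambda$, whence the displayed identity gives an honest complex Gateaux derivative $\Delta f(\lambda,\lambda)[\Delta]$ at every point and in every direction; combined with local boundedness (or the Lipschitz continuity just established), your concluding reduction to classical several-variable holomorphy then goes through. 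Without the intermediate continuity step, the argument as written would fail.
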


 Proposition \ref{nc.prop.10} suggests the following terminology. We say that a set $\Omega \subseteq \bbm^d$ is an \emph{nc-domain} if $\Omega$ is a finitely open nc-set and we say that a topology $\tau$ on $\bbm^d$ is an \emph{nc-topology} if $\tau$ has a basis consisting of nc-domains. We then define special classes of  functions in noncommuting variables as follows.
  \begin{defin}\label{nc.def.20}
 Let $\Omega \in \bbm_n^d$, $\tau$ be an nc-topology, and assume that $f:\Omega \to \bbm^1\otimes \h$ is an $\h$-valued function. We say that $f$ is 
  \emph{$\tau$-holomorphic} if $f$ is a $\tau$-locally bounded nc function on $\Omega$.
  \footnote{i.e., $f$ is an nc-function on $\Omega$ in the sense of Definition \ref{nc.def.10} and for each $\lambda \in \Omega$, there exists $B \subseteq \Omega$ such that $\lambda \in B \in \tau$ and $f|B$ is bounded. }
  We let $\hol_\h^\tau(\Omega)$ denote the collection of $\tau$-holomorphic $\h$-valued functions defined on $\Omega$.
 \end{defin}
 Evidently, Proposition \ref{nc.prop.10} guarantees that if $\tau$ is an nc-topology, and $f$ is a $\tau$-holomorphic function in the sense of Definition \ref{nc.def.20}, then $f$ is holomorphic, i.e.,
 \[
 \hol_\h^\tau (\Omega) \subseteq \holnc \subseteq  \hol_\h(\Omega),
 \]
 where $\holnc$ denotes the set of functions in $\hol_\h(\Omega)$ that are nc.

We can now state our second main result, the non-commutative version of Theorem~\ref{int.thm.10}.
 \begin{thm}\label{nc.thm.101}
Assume that $\tau$ is an nc-topology, $\Omega\in \tau$, $\h$ is a Hilbert space, and $\{u^k\}$ is  a $\tau$-locally uniformly bounded sequence in $\hol_\h^\tau(\Omega)$. There exist $u \in \hol_\h^\tau(\Omega)$,  a sequence $\{U^k\}$
of unitary operators on $\h$,  and an increasing sequence of indices $\{k_l\}$ such that
$  (\id_n \otimes U^{k_l}) \ u^{k_l} \to u$ in $\hol(\Omega)$.
\end{thm}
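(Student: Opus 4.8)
The plan is to deduce this from a single application of Theorem~\ref{int.thm.10} by passing to Taylor coefficients. The naive attempt --- applying Theorem~\ref{int.thm.10} separately on each level $\Omega_n\subseteq\c^{dn^2}$ --- fails, since it would produce for each $n$ an \emph{arbitrary} unitary on $\c^n\otimes\h$, whereas we need one sequence $\{U^k\}$ of unitaries on $\h$ whose amplifications $\id_n\otimes U^k$ work simultaneously on every level. The key observation is that a graded holomorphic function $f$ is completely determined by its \emph{ordinary} Taylor coefficients at the points of a countable dense subset of each $\Omega_n$; each such coefficient lies in $\b(\c^n,\c^n\otimes\h)\cong\c^{n^2}\otimes\h$, hence is a finite tuple of vectors of $\h$; and passing from $f$ to $(\id_n\otimes U)f$ acts on each of those $\h$-components just by applying $U$. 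So the whole problem reduces to: given countably many families of uniformly bounded vectors in $\h$, find one unitary sequence (and a subsequence) that rotates all of them into convergence --- which is exactly the content of Theorem~\ref{int.thm.10}.

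In more detail, I would fix a countable dense set $D_n\subseteq\Omega_n$ for each $n$ and index things by the countable set of all $(n,\lambda^0,\alpha,p)$ with $\lambda^0\in D_n$, $\alpha$ a multi-index for $\c^{dn^2}$, and $1\le p\le n^2$. Because $\{u^k\}$ is $\tau$-locally uniformly bounded and every $\tau$-open set is finitely open, each $\lambda^0\in D_n$ has a polydisc neighbourhood in $\Omega_n$ on which $\sup_k\|u^k\|<\infty$, so Cauchy's estimates bound, uniformly in $k$, the norm of each $\h$-component $v^k_{(n,\lambda^0,\alpha,p)}\in\h$ of the $\alpha$-th Taylor coefficient of $u^k|\Omega_n$ at $\lambda^0$. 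I would also record the elementary facts that, for a unitary $U$ on $\h$, the map $f\mapsto(\id_n\otimes U)f$ sends graded $\h$-valued nc-functions to graded $\h$-valued nc-functions with unchanged $\tau$-local bounds (using that $\id_n\otimes U$ is an isometry of $\c^n\otimes\h$ commuting with every $S\otimes\id_\h$, $S\in\bbm_n$, so that the three conditions of Definition~\ref{nc.def.10} are preserved), and that the $\h$-components of the Taylor coefficients of $(\id_n\otimes U)u^k$ are the vectors $Uv^k_{(n,\lambda^0,\alpha,p)}$.

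Next I would scale the $v^k_\iota$ by fixed positive constants so that they become the Taylor coefficients of $\h$-valued holomorphic functions $f^k$ on $\d$ that are uniformly bounded on $\d$, apply Theorem~\ref{int.thm.10} to get unitaries $\{U^k\}$ on $\h$ and indices $\{k_l\}$ with $U^{k_l}f^{k_l}\to f$ in $\hol_\h(\d)$, and read off from convergence of Taylor coefficients that $U^{k_l}v^{k_l}_\iota\to v_\iota$ in $\h$ for every $\iota$, with $\|v_\iota\|\le\sup_k\|v^k_\iota\|$. Reassembling the limits coordinate by coordinate and using the retained Cauchy bounds, for each $n$ and $\lambda^0\in D_n$ the $v_\iota$ are the Taylor coefficients at $\lambda^0$ of a holomorphic $\b(\c^n,\c^n\otimes\h)$-valued function on a fixed polydisc about $\lambda^0$, and a routine head/tail estimate --- the tails controlled because $\|(\id_n\otimes U^{k_l})u^{k_l}\|=\|u^{k_l}\|$ is uniformly bounded there, the head because finitely many Taylor coefficients converge --- shows $(\id_n\otimes U^{k_l})u^{k_l}$ converges uniformly to this function near $\lambda^0$. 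Since these neighbourhoods cover $\Omega_n$ (a $\tau$-neighbourhood on which $\{u^k\}$ is bounded for a given point also serves every sufficiently close base point, so the convergence radii do not degenerate), and the local limits agree on overlaps, they patch to a graded holomorphic $u$ on $\Omega$ with $(\id_n\otimes U^{k_l})u^{k_l}\to u$ uniformly on finitely compact subsets, i.e.\ in $\hol_\h(\Omega)$. Finally $u\in\hol_\h^\tau(\Omega)$: it is an nc-function, since each $(\id_n\otimes U^{k_l})u^{k_l}$ is (as $u^{k_l}\in\hol_\h^\tau(\Omega)\subseteq\holnc$) and the conditions of Definition~\ref{nc.def.10} pass to pointwise limits, and it is $\tau$-locally bounded because $\|u(\lambda)\|=\lim_l\|u^{k_l}(\lambda)\|$ inherits the $\tau$-local bounds.

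The step I expect to be the real hurdle is not any individual estimate but the conceptual reduction in the first paragraph: recognising that the ``same unitary on every level'' requirement disappears once one works at the level of Taylor coefficients, so that the noncommutative statement follows from one invocation of the commutative one. A secondary point deserving care is the choice of a \emph{dense} set of base points on each level together with the gluing of the local limits, which is what lets the argument avoid appealing to a several-variable, Banach-space-valued version of Theorem~\ref{thman}.
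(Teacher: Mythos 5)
Your argument is correct, but it reaches Theorem~\ref{nc.thm.101} by a genuinely different route than the paper. The paper proves a noncommutative Wandering Isometry Lemma (Lemma~\ref{hol.lem.10}) directly: for each $k$ it rotates the finite-dimensional spans of the matrix entries of $u^k(\lambda_1),\dots,u^k(\lambda_k)$ into a fixed flag $\h_1\subseteq\h_2\subseteq\cdots$ of finite-dimensional subspaces, extracts a diagonal subsequence converging at each point of a dense sequence, upgrades to uniform convergence on finitely compact sets via the equicontinuity argument of Proposition~\ref{hol.prop.10}, and then invokes Lemma~\ref{nc.lem.10} to place the limit in $\hol_\h^\tau(\Omega)$. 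You instead observe that all the relevant $\h$-valued data (the components of the Taylor coefficients of $u^k|\Omega_n$ at a countable dense family of base points, which $\id_n\otimes U$ transforms by applying $U$ componentwise) form one countable, uniformly bounded family, which after fixed scaling can be packaged as the coefficient sequence of a single bounded function $f^k\in\hol_\h(\d)$; a single application of the commutative Theorem~\ref{int.thm.10} then produces the unitaries, and a head/tail Cauchy estimate plus patching reassembles the limit. What your approach buys is that the ``same unitary on every level'' difficulty disappears and the noncommutative wandering lemma need not be re-proved; what it costs is extra bookkeeping (the encoding, the several-variable coefficient estimates, the covering and gluing argument), where the paper gets by with point values at a dense sequence alone. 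Your final step, that the limit is an nc-function and $\tau$-locally bounded because both properties pass to pointwise limits, is exactly the content of the paper's Lemma~\ref{nc.lem.10}, so the two proofs coincide there. Two small points worth making explicit if you write this up: that $U*f$ is again nc requires the identity $(\id_n\otimes U)(S\otimes\id_\h)=(S\otimes\id_\h)(\id_n\otimes U)$, which you do note; and in the covering step you should record that every $\tau$-open set is finitely open (since $\tau$ has a basis of nc-domains), so that the $\tau$-neighbourhood of uniform boundedness really does contain a polydisc about each nearby base point in $D_n$.
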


%
As an application of  Theorem~\ref{nc.thm.101}, in Section~\ref{sece} we
prove that the cones
 \[
 \calp=\set{u(\mu)^*u(\lambda)}{u \in \hol_\h(\Omega) \text{ for some Hilbert space } \h}
 \]
 and
 \[
 \calc \  = \ 
 \{ 
 \tensor{\id_{\c^J}}{u(\mu)^*}  \left(  \id - 
 \tensor{\de(\mu)^* \de (\lambda ) }{\id_\h} \right)
  \tensor{\id_{\c^J}}{u(\lambda)}
  \ : \ u \in \hol_\h(\bd) \ {\rm and\ }u\ {\rm is\ nc}
 \}
 \]
 are closed. In this last formula, $\de$ is a $J$-by-$J$ matrix of free polynomials,  
 and $\bd = \{ x : \| \de(x) \| < 1 \}$ is a non-commutative polynomial polyhedron.
(We adopt the convention of \cite{ptd13} and write the tensors  vertically for legibility.)
 
 Proving that the cones are closed is the key step 
in proving realization
 formulas for free holomorphic functions---see \cite{amfree, amfreeII, bmv16b}.
 
 In Section~\ref{secf} we show that the assumptions in
 Proposition  \ref{hol.prop.10}
 can be weakened to just requiring convergence on a set of uniqueness.

\section{A Montel Theorem for Hilbert Space Valued Holomorphic Functions}
\label{secb}

In this section we prove 
Theorem \ref{int.thm.10} from the introduction.
\subsection{Notation and Definitions}
If $\Omega$ is an open set in $\c^d$, $\h$ is a Hilbert space, we let $\hol_\h(\Omega)$ denote the space of holomorphic $\h$-valued functions on $\Omega$.
If $u\in \hol_\h(\Omega)$ and $E \subseteq\Omega$, we let
 \[
 \norm{u}_E = \sup_{\lambda \in E} \norm{u(\lambda)}_\h.
 \]
If $\norm{u}_\Omega <\infty$ then we say that $u$ is \emph{bounded on $\Omega$}. If $\{u^k\}$ is sequence in $\hol_\h(\Omega)$, we say that $\{u^k\}$
is \emph{uniformly bounded on $\Omega$} if
\[
\sup_k \norm{u^k}_\Omega <\infty,
\]
and we say that $\{u^k\}$ is \emph{locally uniformly bounded on $\Omega$} if for each $\lambda \in \Omega$, there exists a neighborhood $B$ of $\lambda$ such that $\{u^k\}$ is uniformly bounded on $B$. Recall that if such a neighborhood exists, then a Cauchy Estimate implies that $\{u^k\}$ is \emph{equicontinuous at $\lambda$}, i.e., for each $\varepsilon >0$ there exists  a ball $B_0$ such that $\lambda \in B_0 \subseteq B$ and
\[
\forall_{\mu \in B_0}\  \forall_k\ \   \norm{u^k(\mu) -u^k(\lambda)} < \varepsilon.
\]

We equip $\hol_\h(\Omega)$ with the usual topology of \emph{uniform convergence on compacta}. Thus, a sequence $\{u^k\}$ in  $\hol_\h(\Omega)$ is \emph{convergent} precisely when there is a function $u\in \hol_\h(\Omega)$ such that
\[
\lim_{k\to \infty} \norm{u^k-u}_E =0
\]
for every compact $E\subseteq \Omega$. We say that a sequence $\{u^k\}$ in $\hol_\h(\Omega)$ is a \emph{Cauchy sequence} if for each compact $E\subseteq \Omega$, $\{u^k\}$ is \emph{uniformly Cauchy on $E$}, i.e., for each $\varepsilon>0$, there exists $N$ such that
\[
k,l \ge N \implies  \norm{u^k-u^l}_E < \varepsilon.
\]
It is well known that $\hol_\h(\Omega)$ is \emph{complete}, i.e.,  every Cauchy sequence in $\hol_\h(\Omega)$ converges. The following result is proved in \cite[Thm. 2.1]{an00}; we include a proof that easily generalizes to
Proposition~\ref{hol.prop.10}.

\begin{prop}\label{int.prop.10}
Assume that $\Omega$ is an open set in $\c^d$, $\{\lambda_i\}$ is a dense sequence in $\Omega$, and $\h$ is a Hilbert space. If $\{u^k\}$ is sequence in $\hol_\h(\Omega)$ that is locally uniformly bounded on $\Omega$, and for each fixed $i$, $\{u^{k}(\lambda_i)\}$ is a convergent sequence in $\h$, then $\{ u^{k}\}$ is a convergent sequence in $\hol_\h(\Omega)$.
\end{prop}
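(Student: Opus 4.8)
The plan is to prove this by a standard Arzel\`a--Ascoli-type diagonal argument, adapted to the Hilbert-space-valued setting, showing that $\{u^k\}$ is uniformly Cauchy on every compact subset of $\Omega$ and then invoking the completeness of $\hol_\h(\Omega)$.

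First I would reduce to showing that $\{u^k\}$ is uniformly Cauchy on an arbitrary compact set $E \subseteq \Omega$. Fix $\varepsilon > 0$. Since $\{u^k\}$ is locally uniformly bounded, a Cauchy estimate gives equicontinuity at each point of $\Omega$: for every $\lambda \in \Omega$ there is an open ball $B_\lambda$ centered at $\lambda$ with $\norm{u^k(\mu) - u^k(\lambda)} < \varepsilon$ for all $\mu \in B_\lambda$ and all $k$. Cover $E$ by finitely many such balls $B_{\lambda_1^\ast}, \dots, B_{\lambda_m^\ast}$; since $\{\lambda_i\}$ is dense in $\Omega$, we may shrink each ball and relocate its center so that its center is one of the points $\lambda_i$ of the dense sequence, and so that the equicontinuity estimate still holds on that ball. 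So we obtain finitely many indices $i_1, \dots, i_m$ with $E \subseteq \bigcup_{r=1}^m B_{i_r}$, where $B_{i_r}$ is a ball about $\lambda_{i_r}$ on which $\norm{u^k(\mu) - u^k(\lambda_{i_r})} < \varepsilon$ for all $\mu$ and all $k$.

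Next, since for each of the finitely many indices $i_1, \dots, i_m$ the sequence $\{u^k(\lambda_{i_r})\}$ converges in $\h$, it is in particular Cauchy, so there is an $N$ such that $k, l \ge N$ implies $\norm{u^k(\lambda_{i_r}) - u^l(\lambda_{i_r})} < \varepsilon$ for every $r = 1, \dots, m$ simultaneously. Now for any $\mu \in E$, pick $r$ with $\mu \in B_{i_r}$; then for $k, l \ge N$,
\[
\norm{u^k(\mu) - u^l(\mu)} \le \norm{u^k(\mu) - u^k(\lambda_{i_r})} + \norm{u^k(\lambda_{i_r}) - u^l(\lambda_{i_r})} + \norm{u^l(\lambda_{i_r}) - u^l(\mu)} < 3\varepsilon.
\]
Taking the supremum over $\mu \in E$ shows $\norm{u^k - u^l}_E \le 3\varepsilon$ for $k, l \ge N$, so $\{u^k\}$ is uniformly Cauchy on $E$. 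Since $E$ was an arbitrary compact subset of $\Omega$, $\{u^k\}$ is a Cauchy sequence in $\hol_\h(\Omega)$, and completeness of $\hol_\h(\Omega)$ yields a limit $u \in \hol_\h(\Omega)$.

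The only mild subtlety — and the step I would be most careful about — is the covering argument: one needs the finite subcover of $E$ to consist of balls that are both centered at points of the dense sequence $\{\lambda_i\}$ \emph{and} small enough that the uniform (in $k$) equicontinuity estimate holds on them. This is handled by first using local uniform boundedness plus the Cauchy estimate to get, at each $\lambda \in E$, a ball on which the oscillation of every $u^k$ is $<\varepsilon$; then using density to find a $\lambda_i$ inside the concentric half-ball and a smaller ball about $\lambda_i$ contained in the original; compactness of $E$ then extracts a finite subcover. Everything else is the routine three-$\varepsilon$ estimate above, and no diagonal extraction over the full dense sequence is even needed since only finitely many $\lambda_i$ enter for each fixed compact $E$ and $\varepsilon$.
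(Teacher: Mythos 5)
Your proof is correct and follows essentially the same route as the paper's: equicontinuity from the Cauchy estimate, a finite cover of the compact set by balls of small oscillation, the three-$\varepsilon$ triangle inequality through nearby points of the dense sequence, and completeness of $\hol_\h(\Omega)$. The one place you add unnecessary work is the recentering of the balls at points $\lambda_i$; the paper simply takes any finite cover by balls on which the oscillation is small and then uses density to find \emph{some} $\lambda_{i_r}$ inside each ball, which suffices for the estimate.
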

\bp
Fix a compact set $E\subseteq \Omega$ and $\varepsilon>0$. Note that as $\{u^k\}$  is assumed to be locally uniformly bounded on $\Omega$,  $\{u^k\}$ is equicontinuous at each point of $\Omega$. Hence, as $E$ is compact, we may construct a finite collection $\set{B_r}{r=1,\ldots,m}$ of open balls in $\c^d$ such that
\be\label{com.50}
E \subseteq \bigcup_{r=1}^m B_r \subseteq \Omega
\ee
and
\be\label{com.60}
\forall_r\ \ \forall_{\mu_1,\mu_2 \in B_r}\ \ \forall_k\ \  \norm{u^k(\mu_1)-u^k(\mu_2)}<\varepsilon/3
\ee
As $\{\lambda_i\}$ is assumed dense in $\Omega$,
\be\label{com.70}
\forall_r\ \ \exists_{i_r}\ \  \lambda_{i_r} \in B_r.
\ee
Consequently, as for each fixed $i$, we assume that $\{u^{k}(\lambda_i)\}$ is a convergent (and hence, Cauchy) sequence in $\h$, there exists $N$ such that
\be\label{com.80}
\forall_r\ \   k,j\ge N \implies \norm{u^k(\lambda_{i_r})-u^j(\lambda_{i_r})}<\varepsilon/3.
\ee

Now, fix $\lambda\in E$. Use \eqref{com.50} to choose $r$ such that $\lambda \in B_r$. Use \eqref{com.70} to choose $i_r$ such that $\lambda_{i_r} \in B_r$. As $\lambda$ and $\lambda_{i_r}$ are both in $B_r$, we see from \eqref{com.60} that
\[
\forall_k\ \  \norm{u^k(\lambda)-u^k(\lambda_{i_r})}<\varepsilon/3.
\]
Hence, using \eqref{com.80}, we have that if $k,j \ge N$, then
\begin{align*}
&\norm{u^k(\lambda)-u^j(\lambda)}\\ \\
\le\ \ \ &\norm{u^k(\lambda)-u^k(\lambda_{i_r})}+
\norm{u^k(\lambda_{i_r})-u^j(\lambda_{i_r})}+
\norm{u^j(\lambda_{i_r})-u^j(\lambda)}\\ \\
< \varepsilon.
\end{align*}

Since the concluding estimate in the previous paragraph holds for  an arbitrary point $\lambda\in E$,  $\{u^k\}$ is uniformly Cauchy on $E$. Since $E$ is an arbitrary compact subset of $\Omega$, $\{u^k\}$ is a Cauchy sequence in $\hol_\h(\Omega)$. Therefore, $\{u^k\}$ converges in $\hol_\h(\Omega)$.
\subsection{The Proof of Theorem \ref{int.thm.10}}
Theorem \ref{int.thm.10} follows quickly from Proposition \ref{int.prop.10} and the following lemma.
\begin{lem}\label{com.lem.10} {\bf Wandering Isometry Lemma.}
Assume that $\Omega$ is an open set in $\c^d$, $\{\lambda_i\}$ is a sequence in $\Omega$, and $\h$ is 
a Hilbert space. If $\{u^k\}$ is sequence in $\hol_\h(\Omega)$ that is locally uniformly bounded on $\Omega$, then there exists a subsequence $\{u^{k_l}\}$ and a sequence $\{V^l\}$
of unitary operators on $\h$ such that for each fixed $i$, $\{V^l u^{k_l}(\lambda_i)\}$ is a convergent sequence in $\h$.
\end{lem}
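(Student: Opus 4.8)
The plan is to build the unitaries by a diagonal argument applied to the countable set of points $\{\lambda_i\}$, using at each stage the key structural observation that, modulo a unitary of $\h$, any bounded sequence in $\h$ can be "aligned" to lie in a fixed finite-dimensional subspace up to small error. First I would observe that local uniform boundedness gives, for each fixed $i$, a bound $M_i := \sup_k \norm{u^k(\lambda_i)}_\h < \infty$. The heart of the matter is the following finite-dimensional compactness device: if $\{x^k\}$ is a bounded sequence in $\h$, then for every $\vare > 0$ there is a subsequence and a single unit vector (or finite orthonormal set) such that, after rotating each $x^k$ by a unitary, the rotated vectors agree to within $\vare$ with their projections onto a common finite-dimensional subspace. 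Concretely, each $x^k$ has norm at most $M$, so $x^k$ lies in the closed ball of radius $M$; write $x^k = \norm{x^k} e^k$ with $e^k$ a unit vector (handle $x^k = 0$ trivially). Choose a unitary $W^k$ taking $e^k$ to a fixed unit vector $e_0$; then $W^k x^k = \norm{x^k} e_0$ lies in the one-dimensional space $\c e_0$, and $\{\norm{x^k}\} \subseteq [0,M]$ has a convergent subsequence by Bolzano--Weierstrass. Hence $\{W^{k_l} x^{k_l}\}$ converges.

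To handle all $\lambda_i$ simultaneously I would iterate this with a nested-subsequence (Cantor diagonal) construction, but the subtlety is that the unitary fixing up $\lambda_1$ must not destroy the alignment already achieved at $\lambda_2, \lambda_3, \dots$. The clean way to organize this is to work with finite tuples: at stage $N$, consider the vector $\big(u^k(\lambda_1), \dots, u^k(\lambda_N)\big) \in \h^N$, which is bounded in the Hilbert space $\h^N$; apply the same ball-compactness idea but now I need the rotating unitary to act diagonally, i.e.\ as $U^k \oplus \cdots \oplus U^k$ on $\h^N$, coming from a single $U^k$ on $\h$. This is where one genuinely uses that $\h$ is infinite-dimensional: the finite set of vectors $\{u^k(\lambda_1), \dots, u^k(\lambda_N)\}$ spans a subspace $F^k$ of dimension at most $N$, and I can choose $U^k$ mapping $F^k$ isometrically onto a fixed $N$-dimensional reference subspace $F_0 \subseteq \h$ (which has ample room since $\dim \h = \infty$), extending $U^k$ to a unitary of $\h$ arbitrarily on the orthocomplement. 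Then all of $U^k u^k(\lambda_1), \dots, U^k u^k(\lambda_N)$ lie in the fixed finite-dimensional $F_0$, on which norm-boundedness yields a convergent subsequence by ordinary finite-dimensional compactness. Passing to that subsequence handles $\lambda_1, \dots, \lambda_N$.

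The remaining issue is to make these stagewise choices compatible so that a single subsequence and a single sequence of unitaries works for all $i$ at once. I would do the standard diagonalization: apply the stage-$N$ construction to the subsequence surviving from stage $N-1$, obtaining nested subsequences $\{u^{k}\} \supseteq S_1 \supseteq S_2 \supseteq \cdots$ together with unitaries so that along $S_N$ the vectors $u^k(\lambda_i)$ for $i \le N$ converge after rotation; then take the diagonal subsequence $\{u^{k_l}\}$ with $k_l$ the $l$-th term of $S_l$, and let $V^l$ be the unitary associated to $k_l$ at stage $l$. The one point requiring a little care is that rotating to align $\lambda_1, \dots, \lambda_N$ at stage $N$ may be incompatible with the rotation used at stage $N-1$ for $\lambda_1, \dots, \lambda_{N-1}$; to avoid this I would not insist that $V^l$ refine the previous unitary, but instead re-align the \emph{whole} tuple $(\lambda_1,\dots,\lambda_l)$ at each stage $l$ using the nested subsequence $S_{l-1}$ (so prior convergence along $S_{l-1}$ of the \emph{unrotated} coordinates is what is inductively maintained), and only apply the rotation at the very end along the diagonal. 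In fact the slickest route avoids per-stage unitaries altogether until the end: first diagonalize to get a subsequence along which $\norm{u^k(\lambda_i)}_\h$ converges for every $i$ and along which the Gram matrix entries $\langle u^k(\lambda_i), u^k(\lambda_j)\rangle$ converge for all $i,j$ (possible since each such scalar sequence is bounded); then the limiting Gram data determines, via a GNS-type reconstruction, a limit configuration $\{v_i\} \subseteq \h$ with $\langle v_i, v_j\rangle = \lim_k \langle u^k(\lambda_i), u^k(\lambda_j)\rangle$, and for each $k$ one builds a unitary $V^l$ sending $u^{k_l}(\lambda_i)$ to a vector close to $v_i$ for all $i$ up to the first $l$ of them by matching partial Gram matrices. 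I expect the main obstacle to be exactly this bookkeeping: arranging that a single sequence of unitaries simultaneously works for all infinitely many points, which forces either the Gram-matrix/GNS reformulation or a careful "align the full finite tuple at each diagonal stage" argument rather than a naive refinement of unitaries.
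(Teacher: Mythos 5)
Your plan contains the right raw material---rotate each finite tuple $\bigl(u^k(\lambda_1),\dots,u^k(\lambda_N)\bigr)$ into a fixed finite-dimensional reference subspace and then invoke finite-dimensional compactness---but the way you assemble the stages leaves a genuine gap. At stage $N$ your unitary depends on $N$ as well as on $k$ (it is built from $F^k=\spn\{u^k(\lambda_1),\dots,u^k(\lambda_N)\}$), and the convergence you extract at stage $N$ is convergence of the \emph{stage-$N$ rotations} along $S_N$. The final diagonal sequence $\{V^l u^{k_l}(\lambda_i)\}_l$, with $V^l$ the stage-$l$ unitary, is not a subsequence of any of those convergent sequences, so nothing you have established forces it to converge. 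Your proposed repair---that ``prior convergence along $S_{l-1}$ of the unrotated coordinates is what is inductively maintained''---cannot be right as stated, because the unrotated vectors $u^k(\lambda_i)$ are merely bounded in an infinite-dimensional space and need not converge along any subsequence; that is the whole point of the lemma. Your fallback route (pass to a subsequence along which all Gram entries $\langle u^k(\lambda_i),u^k(\lambda_j)\rangle$ converge, reconstruct limit vectors $v_i$, and build $V^l$ by approximately matching partial Gram matrices) can be made to work, but it rests on an unproved quantitative lemma (tuples with nearly equal Gram matrices are nearly unitarily equivalent) and is heavier than necessary.

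The missing idea, and the one the paper uses, is that the stage-dependence can be eliminated entirely by exploiting the \emph{nesting} of the spans. Fix an orthonormal basis $\{e_j\}$ of $\h$ and set $\h^i=\spn\{e_1,\dots,e_i\}$. For each $k$ the subspaces $\m^k_i=\spn\{u^k(\lambda_1),\dots,u^k(\lambda_i)\}$, $i=1,\dots,k$, form an increasing flag with $\dim\m^k_i\le i=\dim\h^i$, so a single unitary $U^k$ (one per index $k$, no stages) can be chosen with $U^k\m^k_i\subseteq\h^i$ for every $i\le k$ simultaneously. With the unitaries fixed once and for all, $\{U^k u^k(\lambda_i)\}_{k\ge i}$ is, for each $i$, a bounded sequence in the \emph{fixed} finite-dimensional space $\h^i$, and the only diagonalization needed is the ordinary extraction of a subsequence of indices; the compatibility problem you were wrestling with never arises. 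If you replace your per-stage alignment with this flag alignment, your argument becomes the paper's proof.
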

\begin{proof}
If $\h$ is finite dimensional, one can let each unitary be the identity, and the result is the regular Montel theorem.
So we shall assume that $\h$ is infinite dimensional.
Let $\{e_i\}$ be an orthonormal basis for $\h$. For each fixed $k$ let
\[
\h^k = \spn \{e_1,e_2,\ldots,e_k\},
\]
\[
\m^k_i= \spn \{u^k(\lambda_1), u^k(\lambda_2), \ldots, u^k(\lambda_k)\},\qquad i=1,\ldots,k.
\]
For each $k$ choose a unitary $U^k\in \b(\h)$ satisfying
\[
U^k \m^k_i \subseteq \h^i,\qquad i=1,\ldots,k
\]
Observe that with this construction, for each fixed $i$,
\[
\{U^k u^k(\lambda_i)\}_{k = i}^\infty
 \]
 is a bounded sequence in $\h^i$, a finite dimensional Hilbert space.
Therefore, there exist $v_i \in \h$ and an increasing sequence of indices $\{k_l\}$ such that
\[
U^{k_l} u^{k_l}(\lambda_i) \to v_i \ \ \text { in }\  \h\ \ \text{ as }\  l \to \infty.
\]
Applying this fact successively with $i=1$, $i=2$, and so on, at each stage taking a subsequence of the previously selected subsequence, leads to a sequence $\{v_i\}$ in $\h$ and an  increasing sequence of indices $\{k_l\}$ such that
\[
U^{k_l} u^{k_l}(\lambda_i) \to v_i \ \ \text { in }\  \h\ \ \text{ as }\  l \to \infty.
\]
for all $i$. The lemma then follows if we let $V^l = U^{k_l}$.
\end{proof}

\noindent{\bf Proof of Theorem \ref{int.thm.10}.} Assume that $\Omega$ is an open set in $\c^d$, $\h$ is a Hilbert space, and $\{u^k\}$ is  a locally uniformly bounded sequence in $\hol_\h(\d)$. The theorem follows from the classical Montel Theorem (with $U^k = \id_\h$ for all $k$) if $\dim \h<\infty$. Therefore, we may assume that $\dim \h =\infty$.

 Fix a dense sequence $\{\lambda_i\}$ in $\Omega$. By Lemma \ref{com.lem.10}, there exists a subsequence $\{u^{k_l}\}$ and a sequence $\{V^l\}$
of unitary operators on $\h$ such that for each fixed $i$, $\{V^l u^{k_l}(\lambda_i)\}$ is a convergent sequence in $\h$. Furthermore, as $\{u^k\}$ is  locally uniformly bounded, so also, $\{V^l u^{k_l}\}$ is locally uniformly bounded. Therefore, Proposition \ref{int.prop.10} implies that $\{V^l  u^{k_l}\}$ is a convergent sequence in $\hol_\h(\Omega)$. The theorem then follows by choosing $\{U^k\}$ to be any sequence of unitaries in $\b(\h)$ such that $U^{k_l} = V^l$ for all $l$.
\ep
\section{Holomorphic Functions in Noncommuting Variables}
\label{secc}

If $\Omega$ is finitely open in $\bbm^d$, we may construct a \emph {finitely compact-open exhaustion of $\Omega$}, i.e., an increasing sequence of compact sets $\{K_i\}$ that satisfy
\[
K_1 \subset K_2^\circ\subset K_2\subset K_3^\circ \subset \ldots
\]
and  with $\Omega=\cup_i K_i$. For a set $E \subseteq \Omega$ and $f \in \hol(\Omega)$ we let
\[
\norm{f}_E =\sup_{\lambda \in E} \norm{f(\lambda)}
\]
and then in the usual way define a metric $d$ on $\hol(\Omega)$ with the formula
 \[
d(f,g) = \sum_{n=1}^\infty \frac{1}{2^n} \frac{\norm{f-g}_{K_n}}{1+\norm{f-g}_{K_n}}.
\]
It then follows that $f_k \to f$ in the metric space $(\Omega,d)$ if and only if for each finitely compact set $K$ in $\Omega$, $\{f_k\}$ converges uniformly to $f$ on $K$, i.e.,
\[
\lim_{k\to \infty} \norm{f-f_k}_K =0.
\]
Furthermore, $\hol(\Omega)$ is a complete metric space when endowed with this topology of uniform convergence on finitely compact subsets of $\Omega$.

It is a straightforward exercise to extend Montel's Theorem to the space $\hol_\h(\Omega)$ when $\dim \h$ is finite.
\begin{prop}\label{hol.thm.5}
If $\Omega$ is a finitely open set in $\bbm^d$, $\h$ is a Hilbert space with $\dim \h <\infty$, and $\{u^k\}$ is  a finitely locally uniformly bounded sequence in $\hol_\h(\Omega)$, then $\{u^{k}\}$ has a convergent subsequence.
\end{prop}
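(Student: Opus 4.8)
The plan is to reduce, one level at a time, to the classical finite-dimensional Montel theorem, and then run a diagonal argument over the levels $n=1,2,3,\dots$.

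First I would exploit that, since $\Omega$ is finitely open, each $\Omega_n=\Omega\cap\bbm_n^d$ is an open subset of $\bbm_n^d$, which we identify with an open subset of $\c^{dn^2}$. For fixed $n$, restriction $u\mapsto u|\Omega_n$ carries $\hol_\h(\Omega)$ into the space of holomorphic maps $\Omega_n\to\b(\c^n,\c^n\otimes\h)$, and the key point is that $\b(\c^n,\c^n\otimes\h)$ is a \emph{finite}-dimensional Banach space because $\dim\h<\infty$. I would then check that $\{u^k|\Omega_n\}$ is locally uniformly bounded on $\Omega_n$: given $\lambda\in\Omega_n$, finite local uniform boundedness of $\{u^k\}$ supplies a finitely open $B\ni\lambda$ with $\sup_k\norm{u^k}_B<\infty$, and $B\cap\bbm_n^d$ is a neighborhood of $\lambda$ in $\Omega_n$ on which the sequence is uniformly bounded. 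Hence the classical several-variable Montel theorem for maps into a finite-dimensional space applies on $\Omega_n$: every subsequence of $\{u^k|\Omega_n\}$ has a further subsequence converging uniformly on compact subsets of $\Omega_n$.

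Next I would do the diagonal extraction. Apply the previous step with $n=1$ to pass to a subsequence of $\{u^k\}$ converging uniformly on compacta of $\Omega_1$; then pass to a further subsequence converging uniformly on compacta of $\Omega_2$; and so on; finally take the diagonal subsequence $\{u^{k_l}\}$. Then for every $n$, $\{u^{k_l}|\Omega_n\}$ converges uniformly on compact subsets of $\Omega_n$ to some holomorphic $u_n:\Omega_n\to\b(\c^n,\c^n\otimes\h)$. Define $u$ on $\Omega$ by $u|\Omega_n=u_n$; it is graded because each $u^{k_l}$ is, and it is holomorphic level-by-level, so $u\in\hol_\h(\Omega)$.

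Finally I would check that $u^{k_l}\to u$ in $\hol_\h(\Omega)$, i.e.\ uniformly on each finitely compact $K\subseteq\Omega$. By the structural description of finitely compact sets recalled in the excerpt, $K\cap\bbm_m^d=\varnothing$ for all $m$ beyond some $N$, and $K\cap\bbm_m^d$ is compact for $m\le N$; therefore $\norm{u^{k_l}-u}_K=\max_{m\le N}\norm{u^{k_l}-u}_{K\cap\bbm_m^d}\to 0$ as a finite maximum of level-wise convergent quantities (equivalently, one can phrase this via a finitely compact-open exhaustion $\{K_i\}$ and the metric $d$). I do not expect a genuine obstacle here; the only point needing any care is precisely this last reduction of convergence in $\hol_\h(\Omega)$ to convergence on each $\Omega_n$, which rests entirely on the description of finitely compact sets, together with the observation that finite-dimensionality of $\h$ is exactly what makes each level an honest finite-dimensional-target Montel problem.
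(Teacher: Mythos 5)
Your proof is correct and is exactly the argument the paper has in mind: the authors state this proposition without proof, calling it ``a straightforward exercise,'' and the intended route is precisely your level-wise reduction (each $\Omega_n$ open in $\c^{dn^2}$, target $\b(\c^n,\c^n\otimes\h)$ finite-dimensional, classical Montel applies), followed by a diagonal extraction over $n$ and the observation that a finitely compact set meets only finitely many levels, each in a compact set. Nothing is missing.
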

Also, with the setup we have just described, mere notational changes to the proof of  Proposition \ref{int.prop.10} yield a proof of the following proposition.
\begin{prop}\label{hol.prop.10}
Assume that $\Omega$ is a finitely open set in $\bbm^d$, $\{\lambda_i\}$ is a dense sequence in $\Omega$ with $\lambda_i \in \bbm_{n_i}^d$ for each $i$, and $\h$ is a Hilbert space. If $\{u^k\}$ is sequence in $\hol_\h(\Omega)$ that is finitely locally uniformly bounded on $\Omega$, and for each fixed $i$, $\{u^{k}(\lambda_i)\}$ is a convergent sequence in $\b(\c^{n_i},\c^{n_i}\otimes\h)$, then $\{ u^{k}\}$ is a convergent sequence in $\hol_\h(\Omega)$.
\end{prop}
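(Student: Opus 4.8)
The plan is to mimic almost verbatim the proof of Proposition~\ref{int.prop.10}, replacing ``compact'' by ``finitely compact'' throughout and keeping track of the grading. Since the target (the statement of Proposition~\ref{hol.prop.10}) asserts convergence in $\hol_\h(\Omega)$ with $\Omega$ finitely open in $\bbm^d$, it suffices, by completeness of $\hol_\h(\Omega)$, to show that $\{u^k\}$ is uniformly Cauchy on each finitely compact $K\subseteq\Omega$.

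First I would fix a finitely compact set $K\subseteq\Omega$ and $\varepsilon>0$. By the characterization of finite compactness recorded in Section~\ref{secc}, there is an $N_0$ with $K_m=\varnothing$ for $m>N_0$, so $K$ lives entirely inside $\bigcup_{n\le N_0}\bbm_n^d$, and each slice $K_n$ ($n\le N_0$) is a genuine compact subset of $\Omega_n\subseteq\c^{dn^2}$. Since $\{u^k\}$ is finitely locally uniformly bounded, for each fixed $n\le N_0$ the restricted sequence $\{u^k|\Omega_n\}$ is a locally uniformly bounded sequence of honest $\b(\c^n,\c^n\otimes\h)$-valued holomorphic functions on the open set $\Omega_n$, hence equicontinuous at each point of $\Omega_n$ by the Cauchy estimate. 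Applying the finite-subcover argument of Proposition~\ref{int.prop.10} to the compact set $K_n$, I get finitely many balls $B^{(n)}_1,\dots,B^{(n)}_{m_n}$ in $\c^{dn^2}$ covering $K_n$, contained in $\Omega_n$, on each of which every $u^k$ oscillates by less than $\varepsilon/3$. Because $\{\lambda_i\}$ is dense in $\Omega$ (in the finite topology, which just means dense slice-by-slice), each ball $B^{(n)}_r$ contains some $\lambda_{i}$ with $n_i=n$; and for each such index $\{u^k(\lambda_i)\}$ is Cauchy in $\b(\c^{n},\c^{n}\otimes\h)$ by hypothesis, so there is an $N$ (take the max over the finitely many $n\le N_0$ and finitely many balls) beyond which consecutive terms differ by less than $\varepsilon/3$ at all these sample points. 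The usual triangle inequality $\norm{u^k(\lambda)-u^j(\lambda)}\le\norm{u^k(\lambda)-u^k(\lambda_{i_r})}+\norm{u^k(\lambda_{i_r})-u^j(\lambda_{i_r})}+\norm{u^j(\lambda_{i_r})-u^j(\lambda)}<\varepsilon$ then holds for every $\lambda\in K_n$, $n\le N_0$, i.e.\ for every $\lambda\in K$, once $k,j\ge N$. Hence $\{u^k\}$ is uniformly Cauchy on $K$, and as $K$ was arbitrary, $\{u^k\}$ converges in $\hol_\h(\Omega)$.

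I do not expect any real obstacle here; the whole point, as the authors flag, is that ``mere notational changes'' suffice. The only mild care needed is bookkeeping: (i) one must handle the grading by noting that $u^k(\lambda)$ for $\lambda\in\bbm_n^d$ takes values in the fixed space $\b(\c^n,\c^n\otimes\h)$, so that ``convergent sequence in $\h$'' in the scalar proof is correctly read as ``convergent sequence in $\b(\c^n,\c^n\otimes\h)$'' and the equicontinuity/Cauchy-estimate machinery applies slice-by-slice; and (ii) one must invoke the fact that a finitely compact $K$ meets only finitely many levels $\bbm_n^d$, which reduces the problem to finitely many ordinary compact sets and lets one choose a single $N$ uniformly. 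With these two observations in place the argument is a line-by-line transcription of the proof of Proposition~\ref{int.prop.10}, and the completeness of $\hol_\h(\Omega)$ (stated above) closes it out.
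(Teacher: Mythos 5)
Your proposal is correct and is exactly what the paper has in mind: the paper gives no separate argument, stating only that ``mere notational changes to the proof of Proposition \ref{int.prop.10}'' suffice, and your slice-by-slice transcription (using that a finitely compact set meets only finitely many levels $\bbm_n^d$ and that density in the finite topology puts a sample point $\lambda_i$ with $n_i=n$ in every ball of every slice) is precisely that transcription carried out carefully.
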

Just as was the case for Proposition \ref{int.prop.10}, it is possible to relax the assumption in Proposition \ref{hol.prop.10} that $\{\lambda_i\}$ be a dense sequence in $\Omega$, to the assumption that $\{\lambda_i\}$ merely be a set of uniqueness for $\hol_\h(\Omega)$ (see Proposition \ref{uniq.prop.20}).

We now turn to an analog of Theorem \ref{int.thm.10} in the noncommutative setting.
\begin{lem}\label{hol.lem.10}
{\bf Wandering Isometry Lemma (noncommutative case).} Assume that $\Omega$ is a finitely open set in $\bbm^d$ and $\{\lambda_i\}$ is a sequence in $\Omega$ (where, for each $i$, $\lambda_i \in \bbm_{n_i}^d$). If $\h$ is an infinite dimensional Hilbert space and $\{u^k\}$ is sequence in $\hol_\h(\Omega)$ with the property that $\{u^k(\lambda_i)\}$ is bounded for each $i$, then there exists a subsequence $\{u^{k_l}\}$ and a sequence $\{V^l\}$
of unitary operators on $\h$ such that for each fixed $i$, $\{(\id_{n_i} \otimes V^l)\,  u^{k_l}(\lambda_i)\}$ is a convergent sequence in $\b(\c^{n_i},\c^{n_i} \otimes \h)$.
\end{lem}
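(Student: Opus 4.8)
The plan is to mimic the proof of the commutative Wandering Isometry Lemma (Lemma \ref{com.lem.10}), with the single modification that each point $\lambda_i$ now lives in $\bbm_{n_i}^d$, so that $u^k(\lambda_i)$ is an element of the finite-dimensional-fiber bundle $\b(\c^{n_i},\c^{n_i}\otimes\h)$ rather than of $\h$ itself. The key observation is that $\b(\c^{n_i},\c^{n_i}\otimes\h)$ is isometrically a direct sum of $n_i$ copies of $\c^{n_i}\otimes\h$ (one column for each input basis vector of $\c^{n_i}$), so a bounded element has its ``essential range'' living in a subspace of $\h$ of dimension at most $n_i \cdot (\text{something finite})$ once we account for all columns. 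Concretely, if $u^k(\lambda_i) \in \b(\c^{n_i},\c^{n_i}\otimes\h)$, then its range is contained in $\c^{n_i}\otimes\m^k_i$ where $\m^k_i \subseteq \h$ is a subspace of dimension at most $n_i$; so pushing the range into a prescribed finite-dimensional subspace of $\h$ amounts to choosing a unitary $V$ on $\h$ with $V(\m^k_i)$ inside that subspace, and then $(\id_{n_i}\otimes V)\, u^k(\lambda_i)$ has range in $\c^{n_i}\otimes V(\m^k_i)$.

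First I would fix an orthonormal basis $\{e_j\}$ of $\h$ and set $\h^N = \spn\{e_1,\dots,e_N\}$. For each $k$, let $\m^k$ be the (finite-dimensional) subspace of $\h$ spanned by all the ``column vectors'' of $u^k(\lambda_1),\dots,u^k(\lambda_k)$; since $\lambda_i \in \bbm_{n_i}^d$, the operator $u^k(\lambda_i)$ contributes at most $n_i$ vectors, so $\dim \m^k \le \sum_{i=1}^k n_i =: d_k < \infty$. Choose a unitary $U^k \in \b(\h)$ with $U^k(\m^k) \subseteq \h^{d_k}$, but more carefully, arrange (exactly as in Lemma \ref{com.lem.10}) that the part of $\m^k$ coming from $\lambda_i$ alone is sent into a fixed finite-dimensional subspace $\h^{D_i}$ depending only on $i$ (here $D_i = \sum_{r\le i} n_r$ works, provided we order the construction so the $\lambda_i$-columns occupy the first $D_i - D_{i-1}$ slots available). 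Then for each fixed $i$, the sequence $\{(\id_{n_i}\otimes U^k)\, u^k(\lambda_i)\}_{k\ge i}$ is a bounded sequence in the finite-dimensional space $\b(\c^{n_i}, \c^{n_i}\otimes \h^{D_i})$.

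Having arranged this, I would run the standard diagonal argument: by finite-dimensional Bolzano--Weierstrass applied successively at $i=1,2,\dots$, each time passing to a further subsequence, I extract an increasing sequence of indices $\{k_l\}$ such that $(\id_{n_i}\otimes U^{k_l})\, u^{k_l}(\lambda_i)$ converges in $\b(\c^{n_i},\c^{n_i}\otimes\h)$ for every $i$. Setting $V^l = U^{k_l}$ finishes the proof. The only real point requiring care — and the step I expect to be the main obstacle — is the bookkeeping that makes the unitaries $U^k$ send the $\lambda_i$-columns into a subspace $\h^{D_i}$ that is \emph{independent of $k$}: one must be sure that, as $k$ grows and more points $\lambda_1,\dots,\lambda_k$ enter the span, the images of the earlier columns are not displaced. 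This is handled exactly as in the commutative case by building $U^k$ to act block-diagonally with respect to an orthogonal decomposition of $\m^k$ into the ``new'' contributions $\m^k \ominus \m^k_{\text{(from }\lambda_1,\dots,\lambda_{i})}$, at the cost only of enlarging the ambient finite-dimensional targets; since those targets are still finite-dimensional for each fixed $i$, the compactness argument goes through unchanged.
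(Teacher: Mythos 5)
Your proposal takes essentially the same route as the paper's proof: orthogonally decompose the span of the column vectors into the ``new'' contributions from each $\lambda_i$, push those blocks by a unitary into fixed finite-dimensional subspaces of $\h$ that do not depend on $k$, and then run the diagonal argument. The only correction needed is the dimension count: each $u^k(\lambda_i)$ has $n_i$ columns, and each column, being a vector in $\c^{n_i}\otimes\h$, contributes $n_i$ component vectors of $\h$, so the subspace $\m^k_i$ has dimension at most $n_i^2$ rather than $n_i$ and the targets should have dimension $\sum_{r\le i}n_r^2$ --- a harmless change, since all that matters is that these dimensions are finite and independent of $k$.
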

\begin{proof}
Choose an increasing sequence $\{\h_i\}$ of subspaces of $\h$ with the property that
\[
\dim \h_1 = n_1^2\ \  \text{ and } \forall_{i\ge1}\ \
\dim (\h_{i+1} \ominus \h_i) = n_{i+1}^2,
\]
and for each $n$, let $\{e_1,\ldots,e_n\}$ denote the standard basis of $\c^n$.

Fix $k$. For each $i=1,\ldots,k$, as $u^k(\lambda_i):\c^{n_i} \to \c^{n_i} \otimes \h$, there exist $n_i^2$ vectors $x^{k,i}_{r,s}\in \h$, $r,s=1,\ldots, n_i$, such that
\be\label{hol.20}
u^k(\lambda_i)e_r = \sum_{s=1}^{n_i} e_s \otimes x^{k,i}_{r,s},\qquad r=1,\ldots,n_i.
\ee
For each $i=1,\ldots k$, define $\m^k_i$ by
\[
\m^k_i = \spn\set{x^{k,i}_{r,s}}{r,s =1,\ldots,n_i}.
\]
and then define a sequence of spaces $\{\n^k_i\}$ by setting $\n^k_1 =\m^k_1$ and
\[
\n^k_i = \big(\m^k_1 +\m^k_2 +\ldots +\m^k_i\big) \ominus
\big(\m^k_1 +\m^k_2 +\ldots +\m^k_{i-1}\big),
\]
for $i=2,\ldots, k$.
As for each $i=1,\ldots,k$, $\dim \m^k_i \le n_i^2$, so also, for $i=1,\ldots,k$, $\dim \n^k_i \le n_i^2$. Consequently, as the spaces $\{\n^k_i\}$ are also pairwise orthogonal, it follows that there exists a unitary $U^k \in \b(\h)$ such that
\[
U^k(\n^k_1) \subseteq \h_1 \text { and } U^k(\n^k_i) \subseteq \h_i \ominus \h_{i-1} \text{ for }i=2,\ldots, k.
\]
With this construction it follows using \eqref{hol.20} that
\be\label{hol.30}
(\id_{n_i} \otimes U^k)u^k(\lambda_i) (\c^{n_i}) \subseteq \c^{n_i} \otimes \h_i,\qquad i=1,\ldots,k.
\ee

Now observe that as $\eqref{hol.30}$ holds for each $k$,  for each fixed $i$,
\[
(\id_{n_i} \otimes U^k)u^k(\lambda_i) (\c^{n_i}) \subseteq \c^{n_i} \otimes \h_i,\qquad k=i,i+1,\ldots,
 \]
 i.e.,
 \[
 \big\{(\id_{n_i} \otimes U^k)u^k(\lambda_i)\big\}_{k=i}^\infty
 \]
 is a bounded sequence in $\b(\c^{n_i},\c^{n_i}\otimes \h_i)$, a finite dimensional Hilbert space.
Therefore, for each fixed $i$, there exist $L\in \h$ and an increasing sequence of indices $\{k_l\}$ such that
\[
U^{k_l} u^{k_l}(\lambda_i) \to L\ \ \text { in }\  \b(\c^{n_i},\c^{n_i}\otimes \h_i)\ \ \text{ as }\  l \to \infty.
\]
Applying this fact successively with $i=1$, $i=2$, and so on, at each stage taking a subsequence of the previously selected subsequence, leads to a sequence $\{L_i\}$ with $L_i \in \b(\c^{n_i},\c^{n_i}\otimes \h_i)$ for each $i$ and an  increasing sequence of indices $\{k_l\}$ such that
\[
\forall_i\ \ U^{k_l} u^{k_l}(\lambda_i) \to L_i \ \ \text { in }\  \b(\c^{n_i},\c^{n_i}\otimes \h_i)\ \ \text{ as }\  l \to \infty.
\]
The lemma then follows if we let $V^l = U^{k_l}$.
\end{proof}
Lemma \ref{hol.lem.10} suggests the following notation. Let $\Omega$ be a finitely open set in $\bbm^d$ and let $\h$ be a Hilbert space. If $U$ is a unitary acting on $\h$, and $f \in \hol_\h(\Omega)$ then we may define $U*f \in \hol_\h(\Omega)$ by the formula
\[
\forall_n\ \  (U*f)|\,\Omega_n = (\id_n \otimes U) f|\, \Omega_n.
\]
With this notation we may formulate a noncommutative analog of Theorem \ref{int.thm.10} in the noncommutative setting.
\begin{thm}\label{hol.thm.10}
If $\Omega$ is a finitely open set in $\bbm^d$, $\h$ is a Hilbert space, and $\{u^k\}$ is  a finitely locally uniformly bounded sequence in $\hol_\h(\Omega)$, then there exists a sequence $\{U^k\}$
of unitary operators on $\h$ such that $\{U^k * u^{k}\}$ has a convergent subsequence.
\end{thm}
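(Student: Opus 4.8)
The plan is to follow the template of the proof of Theorem~\ref{int.thm.10}, replacing Proposition~\ref{int.prop.10} by its noncommutative counterpart Proposition~\ref{hol.prop.10}, and the Wandering Isometry Lemma by Lemma~\ref{hol.lem.10}. If $\dim \h < \infty$ there is nothing to prove: take $U^k = \id_\h$ for all $k$ and invoke Proposition~\ref{hol.thm.5}. So assume henceforth that $\h$ is infinite dimensional.

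First I would fix a sequence $\{\lambda_i\}$ that is dense in $\Omega$, with $\lambda_i \in \bbm_{n_i}^d$ for each $i$. Such a sequence exists because each $\Omega_n$ is (identified with) an open subset of $\c^{dn^2}$, hence separable, so $\Omega = \bigcup_n \Omega_n$ has a countable dense subset in the finite topology. Since $\{u^k\}$ is finitely locally uniformly bounded, in particular $\{u^k(\lambda_i)\}$ is a bounded subset of $\b(\c^{n_i}, \c^{n_i}\otimes \h)$ for each fixed $i$. Lemma~\ref{hol.lem.10} then supplies a subsequence $\{u^{k_l}\}$ and unitaries $\{V^l\}$ on $\h$ such that, for each fixed $i$, the sequence $\{(\id_{n_i}\otimes V^l)\, u^{k_l}(\lambda_i)\}_l$ converges in $\b(\c^{n_i}, \c^{n_i}\otimes \h)$.

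Next I would observe that for each $l$ the map $f \mapsto V^l * f$ is norm-preserving pointwise, i.e.\ $\norm{(V^l*f)(\lambda)} = \norm{f(\lambda)}$ for every $\lambda \in \Omega$, because $\id_n \otimes V^l$ is unitary on $\c^n \otimes \h$. Hence the sequence $\{V^l * u^{k_l}\}$ is again finitely locally uniformly bounded on $\Omega$, and by construction $(V^l * u^{k_l})(\lambda_i) = (\id_{n_i}\otimes V^l) u^{k_l}(\lambda_i)$ is a convergent sequence in $\b(\c^{n_i},\c^{n_i}\otimes \h)$ for each $i$. Applying Proposition~\ref{hol.prop.10} to the sequence $\{V^l * u^{k_l}\}$ together with the dense set $\{\lambda_i\}$ yields that $\{V^l * u^{k_l}\}$ converges in $\hol_\h(\Omega)$. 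Finally, choose any sequence $\{U^k\}$ of unitaries on $\h$ with $U^{k_l} = V^l$ for all $l$; then $\{U^k * u^k\}$ has the convergent subsequence $\{U^{k_l} * u^{k_l}\} = \{V^l * u^{k_l}\}$, as required.

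I do not expect any genuine obstacle here: the substantive work has already been done in Lemma~\ref{hol.lem.10}, and the remaining points—existence of a countable dense set in the finite topology, the fact that tensoring with a unitary preserves norms, and the matching of the hypothesis of Proposition~\ref{hol.prop.10}—are routine. If anything is worth spelling out, it is only the verification that $\{V^l * u^{k_l}\}$ inherits finite local uniform boundedness from $\{u^k\}$, which is immediate from the pointwise isometry property.
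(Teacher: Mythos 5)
Your proof is correct and follows essentially the same route as the paper's: reduce to the infinite-dimensional case, apply the noncommutative Wandering Isometry Lemma (Lemma~\ref{hol.lem.10}) to a dense sequence, and then invoke the graded convergence criterion to conclude. The only difference is cosmetic—you correctly cite Proposition~\ref{hol.prop.10} where the paper's text cites Proposition~\ref{int.prop.10} (evidently a typo), and you spell out the routine verification that $\{V^l * u^{k_l}\}$ inherits finite local uniform boundedness.
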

\begin{proof}
Assume that $\Omega$ is an open set in $\bbm^d$, $\h$ is a Hilbert space, and $\{u^k\}$ is  a finitely locally uniformly bounded sequence in $\hol_\h(\Omega)$. If $\dim \h < \infty$, then the theorem follows from Proposition \ref{hol.thm.5} if we choose $U^k =\id_\h$ for all $k$. Therefore, we assume that $\dim \h = \infty$.

 Fix a dense sequence $\{\lambda_i\}$ in $\Omega$. By Lemma \ref{hol.lem.10}, there exists a subsequence $\{u^{k_l}\}$ and a sequence $\{V^l\}$
of unitary operators on $\h$ such that for each fixed $i$, $\{V^l u^{k_l}(\lambda_i)\}$ is a convergent sequence in $\b(\c^{n_i},\c^{n_i} \otimes \h)$. Furthermore, as $\{u^k\}$ is  locally uniformly bounded, so also, $\{V^l u^{k_l}\}$ is locally uniformly bounded. Therefore, Proposition \ref{int.prop.10} implies that $\{V^l  u^{k_l}\}$ is a convergent sequence in $\hol_\h(\Omega)$. The theorem then follows by choosing $\{U^k\}$ to be any sequence of unitaries in $\b(\h)$ such that $U^{k_l} = V^l$ for all $l$.
\end{proof}
\section{Locally bounded nc Functions}
\label{secd}

Properties of $\tau$-holomorphic functions can be very sensitive to the choice of nc-topology $\tau$. For example, if $\tau$ is the \emph{ fat topology }  studied in \cite{amif16}, then $\tau$-holomorphic functions satisfy a version of the Implicit Function Theorem. On the other hand if $\tau$ is the \emph{free topology}, studied in \cite{amfree}, then $\tau$-holomorphic functions satisfy the Oka-Weil Approximation Theorem. Remarkably, neither of these theorems holds for the other topology.

Also notice that if $f$ is $\tau$-holomorphic  in the sense of Definition \ref{nc.def.20}, then necessarily $\Omega$, the domain of $f$, is an open set in the $\tau$ topology:
for each $\lambda\in \Omega$ there exists $B_\lambda \subseteq \Omega$ such that $\lambda \in B_\lambda \in \tau$; hence, $
\Omega = \bigcup_{\lambda} B_\lambda \in \tau$.

There are no such subtleties between the nc-topologies when it comes to understanding the implications of local boundedness.
\begin{defin}\label{nc.def.30}
Assume that $\tau$ is an nc-topology and $\Omega \in \tau$. If $\{u^k\}$ is a sequence in $\hol_\h^\tau(\Omega)$,
we say that $\{u^k\}$ is \emph{$\tau$-locally uniformly bounded on $\Omega$} if for each $\lambda\in \Omega$, there exists a $\tau$-open  $B\subseteq \Omega$ such that, $\lambda \in B $ and
\[
\sup_k \norm{u^k}_B <\infty.
\]
\end{defin}
\begin{lem}\label{nc.lem.10}
Assume that $\tau$ is an nc-topology and $\Omega \in \tau$. Let $u \in \hol(\Omega)$ and $\{u^k\}$ be a sequence in $\hol_\h^\tau(\Omega)$. If $\{u^k\}$ is $\tau$-locally uniformly bounded on $\Omega$ and $u^k \to u$ in $\hol_\h(\Omega)$, then $u\in \hol^\tau_\h(\Omega)$.
\end{lem}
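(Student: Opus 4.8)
The plan is to verify directly that $u$ satisfies Definition~\ref{nc.def.20}, i.e., that $u$ is a $\tau$-locally bounded nc-function on $\Omega$. Since $u$ arises as the limit of $\{u^k\}$ in $\hol_\h(\Omega)$, it is automatically a graded $\h$-valued holomorphic function on $\Omega$, so only two things remain: (a) $u$ is an nc-function in the sense of Definition~\ref{nc.def.10}, and (b) $u$ is $\tau$-locally bounded. The one elementary fact underlying both parts is that convergence in $\hol_\h(\Omega)$ forces pointwise convergence: for $\mu \in \Omega_n$ the singleton $\{\mu\}$ is finitely compact, so $\norm{u-u^k}_{\{\mu\}} \to 0$, that is, $u^k(\mu) \to u(\mu)$ in $\b(\c^n,\c^n\otimes\h)$.

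For (a), I would check the three defining conditions of an nc-function and observe that each is preserved under pointwise limits. The grading condition holds for $u$ because it holds for every $u^k$. If $\lambda,\mu,\lambda\oplus\mu \in \Omega$ then $u^k(\lambda\oplus\mu)=u^k(\lambda)\oplus u^k(\mu)$ for all $k$; letting $k\to\infty$ and using pointwise convergence (together with the fact that the block-diagonal embedding is isometric) gives $u(\lambda\oplus\mu)=u(\lambda)\oplus u(\mu)$. Similarly, if $S\in\bbm_n$ is invertible and $\lambda, S\lambda S^{-1} \in \Omega_n$, then $u^k(S\lambda S^{-1})=(S\otimes\id_\h)u^k(\lambda)S^{-1}$ for all $k$, and since $T\mapsto (S\otimes\id_\h)TS^{-1}$ is continuous on $\b(\c^n,\c^n\otimes\h)$, the same identity passes to $u$. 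Hence $u$ is an nc-function, so $u \in \holnc$.

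For (b), fix $\lambda\in\Omega$. Since $\{u^k\}$ is $\tau$-locally uniformly bounded, Definition~\ref{nc.def.30} supplies a $\tau$-open set $B$ with $\lambda\in B\subseteq\Omega$ and $M:=\sup_k\norm{u^k}_B<\infty$. For every $\mu\in B$ we have $\norm{u(\mu)}=\lim_k\norm{u^k(\mu)}\le M$ by the pointwise convergence noted above, so $\norm{u}_B\le M<\infty$. As $\lambda$ was arbitrary, $u$ is $\tau$-locally bounded on $\Omega$. Combining (a) and (b), $u$ is a $\tau$-locally bounded nc-function, i.e., $u\in\hol^\tau_\h(\Omega)$.

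There is no serious obstacle here; the only point requiring (minor) care is the reduction to pointwise statements — recognising that singletons are finitely compact so that $\hol_\h(\Omega)$-convergence yields pointwise convergence, and that the nc-axioms and the norm bound are closed conditions that survive this limiting process. Everything else is bookkeeping against Definitions~\ref{nc.def.10}, \ref{nc.def.20}, and \ref{nc.def.30}.
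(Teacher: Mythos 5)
Your proposal is correct and follows essentially the same route as the paper's own proof: verify the nc-function axioms by passing to pointwise limits, then establish $\tau$-local boundedness by taking the $\tau$-open set $B$ on which the $u^k$ are uniformly bounded and observing that the bound survives the pointwise limit. The only addition is your explicit remark that singletons are finitely compact, which the paper leaves implicit.
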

\begin{proof} Under the assumptions of the lemma, we need to prove the following two assertions.
\be\label{nc.10}
u \text{ is an nc-function on }\Omega.
\ee
\be\label{nc.20}
u \text{ is $\tau$-locally bounded on }\Omega.
\ee

To prove \eqref{nc.10}, note first that as $u\in \hol_\h(\Omega)$, Condition (i) in Definition \ref{nc.def.10} holds. To verify Condition (ii), assume that $\lambda,\mu,\lambda \oplus \mu \in \Omega$. Then, as $u^k \to u$ in $\hol_\h(\Omega)$ and $u^k \in \hol^\tau_\h$ for all $k$,
\begin{align*}
u(\lambda \oplus \mu)& =\lim_{k \to \infty} u^k(\lambda \oplus \mu)\\
&=\lim_{k \to \infty} \big(u^k(\lambda) \oplus u^k(\mu)\big)\\
&=\lim_{k \to \infty} u^k(\lambda) \oplus \lim_{k\to \infty} u^k(\mu)\\
&=u(\lambda) \oplus u(\mu).
\end{align*}
Finally, note that if $n\ge 1$, $S \in \bbm_n$ is invertible, and both $\lambda$ and $S\lambda S^{-1}$ are in $\Omega_n$, then
\begin{align*}
u(S\lambda S^{-1})&=\lim_{k\to \infty}u^k(S\lambda S^{-1})\\
&=\lim_{k\to \infty}(S \otimes \id_\h)\, u^k(\lambda)\,  S^{-1}\\
&=(S \otimes \id_\h)\, u(\lambda)\,  S^{-1},
\end{align*}
which proves Condition (iii).

To prove \eqref{nc.20}, fix $\lambda \in \Omega$. As $\{u^k\}$ is $\tau$-locally uniformly bounded on $\Omega$, Definition \ref{nc.def.30} implies that there exist $B\subseteq \Omega$ and a constant $\rho$ such that $\lambda \in B \in \tau$ and
\[
\sup_k \norm{u^k}_B \le \rho.
\]
Fix $\mu \in B$. As we assume that $u^k \to u$ in $\hol_\h(\Omega)$, it follows that
\[
\norm{u(\mu)} =\lim_{k\to \infty}\norm{u^k(\mu)} \le \rho.
\]
But then,
\[
\norm{u}_B \le \rho.
\]
As $B \in \tau$, this proves that $u$ is $\tau$-locally bounded on $\Omega$.
\end{proof}
Definition \ref{nc.def.30} and Lemma \ref{nc.lem.10}, allow one to easily deduce
Theorem~\ref{nc.thm.101}
 as a corollary of  Theorem \ref{hol.thm.10}.

{\sc Proof of Theorem~\ref{nc.thm.101}:}
As we assume that $\{u^k\}$ is  a $\tau$-locally uniformly bounded sequence in $\hol_\h^\tau(\Omega)$, $\{u^k\}$ is finitely locally uniformly bounded in $\hol_\h(\Omega)$. Therefore, Theorem \ref{hol.thm.10} implies that there exists a sequence $\{U^k\}$
of unitary operators on $\h$ such that $\{U^k * u^k \}$ has a subsequence that converges in $\hol_\h(\Omega)$. Consequently, we may choose $u\in \hol_\h(\Omega)$  and an increasing sequence of indices $\{k_l\}$ such that
$U^{k_l} * u^{k_l} \to u$ in $\hol(\Omega)$. The proof 
is completed by observing that Lemma \ref{nc.lem.10} implies that $u \in \hol_\h^\tau(\Omega)$.
\ep
Let us emphasize that Theorem~\ref{nc.thm.101} asserts that $U^{k_l} * u^{k_l}$ converges to $u$, which 
is in  $\hol_\h^\tau(\Omega)$, uniformly
on sets that are compact in the finite topology; it does not say that 
it converges uniformly on  compact sets in the $\tau$ topology.

Note that the proofs of Lemma~\ref{nc.lem.10} and Theorem~\ref{nc.thm.101}
work identically if $u^k$ are just assumed to be in $\holnc$; so we get
\begin{thm}
\label{thmnc2}
Let $\O$ be 
 a finitely open set in $\bbm^d$, $\h$ is a Hilbert space, and $\{u^k\}$ is  a finitely locally uniformly bounded sequence in $\holnc$, then there exists a sequence $\{U^k\}$
of unitary operators on $\h$ such that $\{U^k * u^{k}\}$ has a  subsequence that converges finitely locally uniformly 
to an element of $\holnc$.
\end{thm}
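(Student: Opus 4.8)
The plan is to run the proof of Theorem~\ref{nc.thm.101} with the $\tau$-boundedness bookkeeping stripped out. Since $\holnc \subseteq \hol_\h(\Omega)$, the sequence $\{u^k\}$ is finitely locally uniformly bounded in $\hol_\h(\Omega)$, so Theorem~\ref{hol.thm.10} supplies a sequence $\{U^k\}$ of unitaries on $\h$, an increasing sequence of indices $\{k_l\}$, and a function $u \in \hol_\h(\Omega)$ with $U^{k_l} * u^{k_l} \to u$ finitely locally uniformly. Writing $v^l = U^{k_l} * u^{k_l}$, it remains only to check that $u \in \holnc$, i.e.\ that $u$ is an nc-function.

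First I would observe that the $*$-action preserves the nc property: if $f \in \holnc$ and $U$ is unitary on $\h$, then $U*f$ is $\h$-graded by definition, it preserves direct sums because $\id_{m+n}\otimes U$ is carried to $(\id_m\otimes U)\oplus(\id_n\otimes U)$ under the identification $\c^{m+n}\otimes\h \cong (\c^m\otimes\h)\oplus(\c^n\otimes\h)$, and it preserves similarity because $\id_n\otimes U$ commutes with $S\otimes\id_\h$ for every $S\in\bbm_n$. Hence each $v^l$ lies in $\holnc$. Now the verification that $u$ is an nc-function is word for word the proof of \eqref{nc.10} inside the proof of Lemma~\ref{nc.lem.10}, with $v^l$ in the role of $u^k$: condition (i) holds since $u \in \hol_\h(\Omega)$; for $\lambda,\mu,\lambda\oplus\mu\in\Omega$ one has $u(\lambda\oplus\mu)=\lim_l v^l(\lambda\oplus\mu)=\lim_l\big(v^l(\lambda)\oplus v^l(\mu)\big)=u(\lambda)\oplus u(\mu)$; and for invertible $S\in\bbm_n$ with $\lambda, S\lambda S^{-1}\in\Omega_n$ one has $u(S\lambda S^{-1})=\lim_l v^l(S\lambda S^{-1})=\lim_l (S\otimes\id_\h)\,v^l(\lambda)\,S^{-1}=(S\otimes\id_\h)\,u(\lambda)\,S^{-1}$. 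All these limits are taken in finite-dimensional operator spaces and are legitimate because finitely-locally-uniform convergence forces pointwise convergence on each $\Omega_n$. Thus $u\in\holnc$, and since $v^l \to u$ finitely locally uniformly the theorem follows.

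There is no real obstacle here: the content of Lemma~\ref{nc.lem.10} splits cleanly into the nc-part \eqref{nc.10}, which uses only that the approximants are nc and that convergence is in $\hol_\h(\Omega)$, and the $\tau$-local-boundedness part \eqref{nc.20}, which is precisely the hypothesis we discard. The one point requiring a line of verification is that the unitary $*$-action does not leave $\holnc$, which is the short computation indicated above.
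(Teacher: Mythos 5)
Your proof is correct and takes essentially the same route as the paper, which simply remarks that the proofs of Lemma~\ref{nc.lem.10} and Theorem~\ref{nc.thm.101} go through verbatim once the $\tau$-local-boundedness bookkeeping is dropped --- exactly the reduction you carry out via Theorem~\ref{hol.thm.10} and the argument for \eqref{nc.10}. Your explicit verification that $f \mapsto U*f$ preserves the nc property (needed so that the approximants $U^{k_l}*u^{k_l}$ are themselves nc before passing to the limit) is a small step the paper leaves implicit, and it is worth recording.
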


\section{Some Applications}
\label{sece}

A useful construct in the study of $\tau$-holomorphic functions is the \emph{duality construction}. If $\Omega$ is a finitely open set it is natural to consider the algebraic tensor product $\hol(\Omega)^* \otimes \hol(\Omega)$. This space can concretely be realized as the set of functions $A$ defined on
 \[
 \Omega \boxtimes \Omega =\bigcup_{n=1}^\infty \big(\Omega\cap \mathbb{M}_n^d\big) \times \big(\Omega\cap \mathbb{M}_n^d\big),
 \]
 and such that there exist a finite dimensional Hilbert space $\h$ and $u,v \in \hol_\h(\Omega)$ such that
 \[
 A(\lambda,\mu) = v(\mu)^*u(\lambda),\qquad (\lambda,\mu) \in  \Omega \boxtimes \Omega.
 \]
 As the functions in $\hol(\Omega)^* \otimes \hol(\Omega)$ are holomorphic in $\lambda$ for each fixed $\mu$ and anti-holomorphic in $\mu$ for each fixed $\lambda$, we may complete  $\hol(\Omega)^* \otimes \hol(\Omega)$ in the topology of uniform convergence on finitely compact subsets of  $\Omega \boxtimes \Omega$ to obtain the space of \emph{hereditary holomorphic functions on $\Omega$}, $\her(\Omega)$. Inside $\her(\Omega)$, we may define a cone $\calp$ by
 \[
 \calp=\set{u(\mu)^*u(\lambda)}{u \in \hol_\h(\Omega) \text{ for some Hilbert space } \h}.
 \]
 \begin{thm}\label{app.thm.10}
 $\calp$ is closed in $\her(\Omega)$.
 \end{thm}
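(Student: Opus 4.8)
The plan is to derive this from the noncommutative wandering Montel theorem (Theorem~\ref{hol.thm.10}) together with the elementary observation that the hereditary function $u(\mu)^*u(\lambda)$ attached to $u\in\hol_\h(\Omega)$ is unchanged when $u$ is replaced, level by level, by $(\id_n\otimes W)u$ for an isometry $W$ on $\h$; in particular it is a unitary invariant and is unaffected by isometrically enlarging the target Hilbert space. Accordingly, suppose $A_k\to A$ in $\her(\Omega)$ with $A_k = u^k(\mu)^*u^k(\lambda)$ for some $u^k\in\hol_{\h_k}(\Omega)$. First I would embed all the $\h_k$ isometrically into a single Hilbert space $\h$ (for instance $\h=\bigoplus_k\h_k$) and replace each $u^k$ by its image in $\hol_\h(\Omega)$; by the observation above the associated hereditary function is still $A_k$, so there is no loss in assuming $u^k\in\hol_\h(\Omega)$ for one fixed $\h$.

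Next I would check that $\{u^k\}$ is finitely locally uniformly bounded. If $K\subseteq\Omega$ is finitely compact then $L_K:=\bigcup_n(K\cap\bbm_n^d)\times(K\cap\bbm_n^d)$ is finitely compact in $\Omega\boxtimes\Omega$, so $\sup_k\norm{A_k}_{L_K}<\infty$ because $A_k$ converges uniformly on $L_K$; since $\norm{u^k(\lambda)}^2=\norm{u^k(\lambda)^*u^k(\lambda)}=\norm{A_k(\lambda,\lambda)}$ for $\lambda\in K$, this gives $\sup_k\norm{u^k}_K<\infty$. As every point of $\Omega$ has a finitely open neighborhood with finitely compact closure, $\{u^k\}$ is finitely locally uniformly bounded. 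Now Theorem~\ref{hol.thm.10} supplies unitaries $U^k$ on $\h$, an increasing sequence $\{k_l\}$, and $u\in\hol_\h(\Omega)$ with $v^l:=U^{k_l}*u^{k_l}\to u$ in $\hol_\h(\Omega)$. By unitary invariance of the hereditary function, $v^l(\mu)^*v^l(\lambda)=u^{k_l}(\mu)^*u^{k_l}(\lambda)=A_{k_l}(\lambda,\mu)$.

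Finally I would pass to the limit. The assignment $v\mapsto v(\mu)^*v(\lambda)$ is continuous from $\hol_\h(\Omega)$ into $\her(\Omega)$: on $L_K$ one has $\norm{v^l(\mu)^*v^l(\lambda)-u(\mu)^*u(\lambda)}\le\norm{v^l(\mu)}\,\norm{v^l(\lambda)-u(\lambda)}+\norm{v^l(\mu)-u(\mu)}\,\norm{u(\lambda)}$, and the convergent sequence $\{v^l\}$ is uniformly bounded on $K$ while $u$ is bounded on $K$, so the right side tends to $0$ uniformly on $L_K$. Hence $A_{k_l}=v^l(\mu)^*v^l(\lambda)\to u(\mu)^*u(\lambda)$ in $\her(\Omega)$, and since also $A_{k_l}\to A$ we conclude $A=u(\mu)^*u(\lambda)\in\calp$, so $\calp$ is closed. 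The substantive content is packaged into Theorem~\ref{hol.thm.10}, so there is no real obstacle; the only points needing care are the reduction to a common Hilbert space and the routine bookkeeping with the finite topology on $\Omega\boxtimes\Omega$.
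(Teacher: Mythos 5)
Your proposal is correct and follows essentially the same route as the paper: embed the target spaces $\h_k$ isometrically into one fixed Hilbert space $\h$ (noting the hereditary function is unchanged), deduce finite local uniform boundedness from the convergence of $u^k(\mu)^*u^k(\lambda)$, apply the wandering Montel theorem (Theorem~\ref{hol.thm.10}), and pass to the limit. The only difference is that you spell out the boundedness estimate via $\norm{u^k(\lambda)}^2=\norm{A_k(\lambda,\lambda)}$ and the continuity of $v\mapsto v(\mu)^*v(\lambda)$, details the paper leaves implicit.
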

 \begin{proof}
 Assume that $\{v^k\}$ is a sequence with $v^k \in \hol_{\h_k}(\Omega)$ for each $k$ and with $v^k(\mu)^*v^k(\lambda) \to A$ in $\her(\Omega)$. We may assume that $\h_k$ is separable for each $k$. Fix a separable infinite dimensional Hilbert space $\h$ and for each each $k$ choose an isometry $V^k:\h_k \to \h$. If for each $k$ we let $u^k = V^k *v^k$, then $\{u^k\}$ is a sequence in $\hol_\h(\Omega)$ and  $u^k(\mu)^*u^k(\lambda) \to A$ in $\her(\Omega)$.

 Now, as  $u^k(\mu)^*u^k(\lambda) \to A$ in $\her(\Omega)$ it follows that $\{u^k\}$ is a finitely locally uniformly bounded sequence in $\hol_\h(\Omega)$. Hence, by Theorem \ref{hol.thm.10}, there exists a sequence $U^k$ of unitary operators on $\h$ such that $\{U^k*u^k\}$ has a convergent subsequence, i.e., there exists $u\in \hol_\h(\Omega)$ and an increasing sequence of indices $\{k_l\}$ such that $U^{k_l}*u^{k_l} \to u$. But then, for each $(\lambda,\mu) \in \Omega \boxtimes \Omega$,
 \begin{align*}
 A(\lambda,\mu)&=\lim_{k\to \infty} u^k(\mu)^*u^k(\lambda)\\
 &=\lim_{l\to \infty} u^{k_l}(\mu)^*u^{k_l}(\lambda)\\
  &=\lim_{l\to \infty}
  (U^{k_l}*u^{k_l})(\mu)^*(U^{k_l}*u^{k_l})(\lambda)\\
  &=u(\mu)^*u(\lambda),
 \end{align*}
 i.e., $A \in \calp$.
 \end{proof}

 We also may may use wandering Montel Theorems to study sums of $\tau$-holomorphic dyads. We let $\her^\tau(\Omega)$ denote the closure of
 \[
\set{v(\mu)^*u(\lambda)}{u,v \in \hol_\h^\tau(\Omega) \text{ for some finite dimensional Hilbert space } \h}
 \]
 inside $\her(\Omega)$ and define $\calp^\tau$ in $\her^\tau(\Omega)$ by
 \[
  \calp^\tau=\set{u(\mu)^*u(\lambda)}{u \in \hol_\h^\tau(\Omega) \text{ for some Hilbert space } \h}.
 \]
  \begin{thm}\label{app.thm.20}
  Let $\tau$ be an nc-topology, and $\Omega \in \tau$. Then
 $\calp^\tau$ is closed in $\her^\tau(\Omega)$.
 \end{thm}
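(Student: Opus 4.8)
The plan is to mimic the proof of Theorem~\ref{app.thm.10}, but now keeping track of the $\tau$-holomorphy of the limit using Lemma~\ref{nc.lem.10}. Suppose $\{v^k\}$ is a sequence with $v^k \in \hol_{\h_k}^\tau(\Omega)$ for each $k$ and with $v^k(\mu)^*v^k(\lambda) \to A$ in $\her^\tau(\Omega)$ (equivalently, in $\her(\Omega)$). As before, each $\h_k$ may be taken separable; fix a separable infinite dimensional Hilbert space $\h$, choose isometries $V^k:\h_k \to \h$, and set $u^k = V^k * v^k$. One checks that $u^k$ is still $\tau$-holomorphic: it is an nc-function because $v^k$ is and $V^k*(\cdot)$ respects direct sums and similarities, and it is $\tau$-locally bounded with the same bounds as $v^k$ on the same $\tau$-open sets. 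Moreover $u^k(\mu)^*u^k(\lambda) = v^k(\mu)^*v^k(\lambda) \to A$ since $(V^k)^*V^k = \id$.

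Next, convergence of $u^k(\mu)^*u^k(\lambda)$ in $\her(\Omega)$ forces $\{u^k\}$ to be finitely locally uniformly bounded in $\hol_\h(\Omega)$: on a finitely compact set $K$, $\norm{u^k}_K^2 = \sup_{\lambda\in K}\norm{u^k(\lambda)^*u^k(\lambda)}$, and the diagonal values $u^k(\lambda)^*u^k(\lambda)$ are uniformly bounded because they converge to $A(\lambda,\lambda)$ uniformly on the finitely compact set $\{(\lambda,\lambda):\lambda\in K\}\subseteq \Omega\boxtimes\Omega$. Now apply Theorem~\ref{hol.thm.10} (or Theorem~\ref{thmnc2}, if one prefers to work directly with nc functions): there is a sequence $\{U^k\}$ of unitaries on $\h$ and an increasing sequence $\{k_l\}$ with $U^{k_l}*u^{k_l} \to u$ in $\hol_\h(\Omega)$ for some $u \in \hol_\h(\Omega)$. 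The diagonal computation from the proof of Theorem~\ref{app.thm.10} then gives $A(\lambda,\mu) = \lim_l u^{k_l}(\mu)^*u^{k_l}(\lambda) = \lim_l (U^{k_l}*u^{k_l})(\mu)^*(U^{k_l}*u^{k_l})(\lambda) = u(\mu)^*u(\lambda)$, using that $(U^{k_l}*f)(\mu)^*(U^{k_l}*g)(\lambda) = f(\mu)^*(\id\otimes U^{k_l})^*(\id\otimes U^{k_l})g(\lambda) = f(\mu)^*g(\lambda)$.

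It remains to verify that $u$ is itself $\tau$-holomorphic, so that $A = u(\mu)^*u(\lambda) \in \calp^\tau$. This is exactly where Lemma~\ref{nc.lem.10} enters: the sequence $\{U^{k_l}*u^{k_l}\}$ lies in $\hol_\h^\tau(\Omega)$ (each $u^{k_l}$ is $\tau$-holomorphic, and $U*(\cdot)$ preserves this as noted above), it is $\tau$-locally uniformly bounded on $\Omega$ (the $\tau$-open sets and bounds carried over from the $v^k$, and applying a fixed unitary in the $\h$-factor does not change norms), and it converges to $u$ in $\hol_\h(\Omega)$; hence $u \in \hol_\h^\tau(\Omega)$ by the lemma. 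I expect the main point requiring care---though it is not deep---to be the bookkeeping in the first paragraph: confirming that composing with the isometries $V^k$ and then the unitaries $U^{k_l}$ produces a $\tau$-locally uniformly bounded sequence of $\tau$-holomorphic functions, since the whole argument hinges on being able to feed such a sequence into Lemma~\ref{nc.lem.10}. Everything else is a transcription of the proof of Theorem~\ref{app.thm.10}.
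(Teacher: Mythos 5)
Your argument is correct and follows essentially the same route as the paper: the paper simply cites Theorem~\ref{nc.thm.101} (which already packages Theorem~\ref{hol.thm.10} together with Lemma~\ref{nc.lem.10}) and refers back to the proof of Theorem~\ref{app.thm.10} for the reduction to a common Hilbert space and the diagonal limit computation, all of which you carry out explicitly. The one hypothesis both you and the paper pass over quickly is the $\tau$-local \emph{uniform} boundedness of the sequence (needed for Lemma~\ref{nc.lem.10}, resp.\ Theorem~\ref{nc.thm.101}), as opposed to the finite local uniform boundedness that follows directly from convergence in $\her(\Omega)$; your parenthetical ``bounds carried over from the $v^k$'' is no more and no less detailed than the paper on this point.
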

 \begin{proof}
Assume that $u^k(\mu)^*u^k(\lambda) \to A$ in $\her(\Omega)$, where, as in the proof of Theorem \ref{app.thm.10}, we may assume that $u^k \in \hol_\h^\tau(\Omega)$ for each $k$. 
By Theorem \ref{nc.thm.101}, there exist $u\in \hol_\h^\tau(\Omega)$, a sequence $U^k$ of unitary operators on $\h$, and and an increasing sequence of indices $\{k_l\}$ such that $U^{k_l}*u^{k_l} \to u$. But then as in the proof of Theorem \ref{app.thm.10}, $A(\lambda,\mu) =u(\mu)^*u(\lambda)$ for all $(\lambda,\mu) \in \Omega \boxtimes \Omega$, i.e., $A \in \calp^\tau$.
 \end{proof}
 
 Finally, we shall prove that the model cone is closed; this is the key ingredient in the proof of the realization formula for free holomorphic functions \cite{amfree, bmv16b}.
 Let $\de$ be a $J$-by-$L$ matrix whose entries are free polynomials in $d$ variables.
 We define $\bd$ to be the polynomial polyhedron
 \[
 \bd \ := \ \{ x \in \md : \| \de(x) \| < 1 \}.
 \]
 The free topology is the nc-topology generated by the sets $\bd$, as $\de$ ranges over all matrices of 
 polynomials. 
 The model cone $\calc$ is the set of hereditary functions on $\bd$ of the form
 \be
 \label{defe1}
 \calc \  := \ 
 \{ 
 \tensor{\id_{\c^J}}{u(\mu)^*}  \left(  \id - 
 \tensor{\de(\mu)^* \de (\lambda ) }{\id_\h} \right)
  \tensor{\id_{\c^J}}{u(\lambda)}
  \ : \ u \in \hol_\h(\bd) \ {\rm and\ }u\ {\rm is\ nc},\  \ {\rm for\ some\ Hilbert\ space}\ \h 
 \}.
 \ee
 We write the tensors vertically just to enhance readability.
 \begin{thm}\label{app.thm.30}
 The model cone 
 $\calc$, defined in \eqref{defe1} is closed in $\her(\bd)$.
 \end{thm}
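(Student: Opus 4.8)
The plan is to mimic the proof of Theorem~\ref{app.thm.10}, using Theorem~\ref{thmnc2} (the version of the wandering Montel theorem for nc-functions, rather than $\tau$-holomorphic functions) in place of Theorem~\ref{hol.thm.10}, and then checking that the algebraic form of the cone is preserved under the limit. Specifically, suppose $\{u^k\}$ is a sequence with $u^k \in \hol_{\h_k}(\bd)$, each $u^k$ nc, and
\[
\tensor{\id_{\c^J}}{u^k(\mu)^*}\left(\id - \tensor{\de(\mu)^*\de(\lambda)}{\id_{\h_k}}\right)\tensor{\id_{\c^J}}{u^k(\lambda)} \ \longrightarrow \ A
\]
in $\her(\bd)$. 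As in the proof of Theorem~\ref{app.thm.10}, by composing with isometries we may assume all the $\h_k$ are equal to one fixed separable infinite-dimensional Hilbert space $\h$; note that composing $u^k$ on the left with $V^k$ (i.e.\ replacing $u^k$ by $V^k * u^k$) leaves the displayed hereditary function unchanged, since $V^k$ is an isometry, and preserves the nc property.

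\vspace{2pt}

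\noindent\textbf{Step 1: Local boundedness.} From convergence in $\her(\bd)$, the diagonal evaluations $A(\lambda,\lambda)$ are finitely locally bounded, and since $\id - \tensor{\de(\lambda)^*\de(\lambda)}{\id_\h} \ge (1-\norm{\de(\lambda)}^2)\,\id > 0$ on $\bd$, locally bounded below away from $0$, we can read off a finite local uniform bound on $\norm{u^k(\lambda)}$. This is exactly the argument used implicitly in Theorem~\ref{app.thm.10}; the only extra point here is that one uses the strict positivity of $\id - \tensor{\de(\lambda)^*\de(\lambda)}{\id}$, which holds precisely because $\bd$ is defined by $\norm{\de(x)}<1$, and this lower bound is uniform on finitely compact subsets of $\bd$.

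\vspace{2pt}

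\noindent\textbf{Step 2: Apply the wandering Montel theorem.} By Step 1, $\{u^k\}$ is finitely locally uniformly bounded in $\holnc$, so by Theorem~\ref{thmnc2} there is a sequence $\{U^k\}$ of unitaries on $\h$ and a subsequence $\{k_l\}$ with $U^{k_l} * u^{k_l} \to u$ finitely locally uniformly, where $u \in \holnc$. Replacing $u^k$ by $U^k * u^k$ changes the displayed hereditary function by replacing $\id$ on the middle factor by $(\id_{\c^J} \otimes \id_n \otimes U^k)(\cdots)(\id_{\c^J}\otimes\id_n\otimes (U^k)^*)$ — but since $\id$ and $\de(\mu)^*\de(\lambda)\otimes \id_\h$ both commute with $\id_{\c^J}\otimes\id_n\otimes U^k$ (the latter because $U^k$ acts only on the $\h$ factor), conjugating by $\id_{\c^J}\otimes\id_n\otimes U^k$ leaves the value unchanged. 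Hence the hereditary functions built from $U^{k_l}*u^{k_l}$ still converge to $A$.

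\vspace{2pt}

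\noindent\textbf{Step 3: Pass to the limit.} Now fix $(\lambda,\mu) \in \bd \boxtimes \bd$. Since $U^{k_l}*u^{k_l} \to u$ at the (finitely compact) points $\lambda$ and $\mu$ in $\b(\c^n,\c^n\otimes\h)$, and the expression
\[
\tensor{\id_{\c^J}}{(U^{k_l}*u^{k_l})(\mu)^*}\left(\id - \tensor{\de(\mu)^*\de(\lambda)}{\id_\h}\right)\tensor{\id_{\c^J}}{(U^{k_l}*u^{k_l})(\lambda)}
\]
is jointly continuous in the pair of (finite-rank) operators being plugged in, it converges to
\[
\tensor{\id_{\c^J}}{u(\mu)^*}\left(\id - \tensor{\de(\mu)^*\de(\lambda)}{\id_\h}\right)\tensor{\id_{\c^J}}{u(\lambda)}.
\]
Comparing with Step 2, this equals $A(\lambda,\mu)$. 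Since $u \in \hol_\h(\bd)$ and $u$ is nc, $A \in \calc$, which is what we wanted.

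\vspace{2pt}

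I expect the main obstacle to be bookkeeping rather than anything deep: one must be careful that ``composing with an isometry'' and ``composing with a unitary'' on the range of $u^k$ genuinely fixes the value of the model dyad, which relies on the fact that these operators act only on the $\h$-tensor-leg and therefore commute past both $\id$ and $\de(\mu)^*\de(\lambda)\otimes\id_\h$; and one must verify that the $\her(\bd)$ convergence really does force a \emph{finite local uniform} bound on $u^k$ (Step 1), using the strict defining inequality of the polyhedron $\bd$ so that $\id-\tensor{\de(\lambda)^*\de(\lambda)}{\id}$ is bounded below on finitely compact sets. Once those two points are nailed down, everything else is a routine transcription of the proof of Theorem~\ref{app.thm.10}, with Theorem~\ref{thmnc2} supplying the wandering unitaries that rescue compactness in the infinite-dimensional fibre.
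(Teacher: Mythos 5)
Your proposal is correct and follows essentially the same route as the paper: extract local uniform boundedness of $\{u^k\}$ from the convergence of the hereditary dyads via the uniform lower bound $\id - \de(\lambda)^*\de(\lambda)\otimes\id \ge (1-\|\de(\lambda)\|^2)\id$ on finitely compact subsets of $B_\delta$, apply a wandering Montel theorem, and pass to the limit using the fact that the unitaries act only on the $\h$-leg and so leave the dyad unchanged. The only (beneficial) deviation is that you invoke Theorem~\ref{thmnc2} rather than Theorem~\ref{hol.thm.10}, which cleanly justifies the paper's implicit claim that the limit function $u$ is again nc; the rest is the same argument with the bookkeeping written out.
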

 \begin{proof}
 Suppose $u^k$ is a sequence of nc functions  in $ \hol_\h(\bd)$ (we may assume the space $\h$ is the same for each $u^k$, as in the proof of Thm.~\ref{app.thm.10}),
 so that
 \be
 \label{eqe5}
  \tensor{\id_{\c^J}}{u^k(\mu)^*}  \left(  \id - 
 \tensor{\de(\mu)^* \de (\lambda ) }{\id_\h} \right)
  \tensor{\id_{\c^J}}{u^k(\lambda)}
 \ee
 converges in $\her(\bd)$ to $A(\lambda,\mu)$.
 On any finitely compact set, $\| \de(x) \|$ will be bounded by a constant that is strictly less than one.
 Since \eqref{eqe5} converges uniformly on finitely compact subsets of
 $\bd \boxtimes \bd$, this means that $u^k$ is a finitely  locally uniformly bounded sequence.
 Therefore by Thm.~\ref{hol.thm.10}, there exist unitaries $U^k$ such that $U^k * u^k$ has a convergent
 subsequence, which converges to some nc function $u \in \hol_\h(\bd)$.
 Then
 \[
 A(\lambda, \mu) \=
 \tensor{\id_{\c^J}}{u(\mu)^*}  \left(  \id - 
 \tensor{\de(\mu)^* \de (\lambda ) }{\id_\h} \right)
  \tensor{\id_{\c^J}}{u(\lambda)},
  \]
  as desired.

 \end{proof}
 
\section{Sets of Uniqueness}
\label{secf}

In this section we shall show that the assumption in Propositions \ref{int.prop.10} and \ref{hol.prop.10}, that $\{\lambda_i\}$ be a dense sequence in $\Omega$, can be relaxed to the assumption that $\{\lambda_i\}$ be a set of uniqueness for $\hol(\Omega)$. We remark that it is an elementary fact that if $\h$ is a Hilbert space, then $\{\lambda_i\}$ is a set of uniqueness for $\hol_\h(\Omega)$ if and only if $\{\lambda_i\}$ is a set of uniqueness for $\hol(\Omega)$.

The following proposition is essentially the same as the Arendt-Nikolski Theorem~\ref{thman}, 
so we shall omit the proof.

\begin{prop}\label{uniq.prop.10}
Assume that $\Omega$ is an open set in $\c^d$,  $\{\lambda_i\}$ is a sequence in $\Omega$ that is a set of uniqueness for $\hol_\h(\Omega)$
\footnote
 {i.e., if $f\in \hol(\Omega)$ and $f(\lambda_i) =0$ for all $i$, then $f(\lambda)=0$ for all $\lambda \in \Omega$.},
 and $\h$ is a Hilbert space.
 If $\{u^k\}$ is sequence in $\hol_\h(\Omega)$ that is locally uniformly bounded on $\Omega$, and for each fixed $i$ $\{u^k(\lambda_i)\}$ is a convergent sequence in $\h$, then $\{u^k\}$ converges in $\hol_\h(\Omega)$.
\end{prop}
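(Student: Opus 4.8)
The plan is to avoid weak-compactness entirely and instead reduce the vector-valued assertion to the classical (scalar) Vitali--Montel theorem, using the Hilbert space inner product to ``square'' the sequence. Set $v_i := \lim_{k\to\infty}u^k(\lambda_i)\in\h$, which exists in norm by hypothesis. For $k,l\in\N$ I would define a scalar function $A^{k,l}$ on the open set $\Omega\times\overline{\Omega}\subseteq\c^{2d}$ by
\[
A^{k,l}(\lambda,\overline{\mu}) \= \ip{u^k(\lambda)}{u^l(\mu)}_\h \= u^l(\mu)^*u^k(\lambda).
\]
Since $u^k,u^l$ are holomorphic and $\h$-valued, $A^{k,l}$ is holomorphic in $\lambda$ and in $\overline{\mu}$, hence holomorphic on $\Omega\times\overline{\Omega}$; and by Cauchy--Schwarz together with the local uniform boundedness of $\{u^k\}$ (near $(\lambda_0,\overline{\mu_0})$ bound $|A^{k,l}|$ by $\norm{u^k(\lambda)}\,\norm{u^l(\mu)}$ on a product of balls on which $\{u^k\}$ is uniformly bounded), the whole family $\set{A^{k,l}}{k,l\in\N}$ is locally uniformly bounded on $\Omega\times\overline{\Omega}$.

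Next I would record two elementary facts. \emph{(a)} The set $\Lambda' := \set{(\lambda_i,\overline{\lambda_j})}{i,j\in\N}$ is a set of uniqueness for $\hol(\Omega\times\overline{\Omega})$: if $H$ is holomorphic there and vanishes on $\Lambda'$, then for each fixed $j$ the function $\lambda\mapsto H(\lambda,\overline{\lambda_j})$ vanishes on the uniqueness set $\{\lambda_i\}$, hence vanishes identically; so $H(\,\cdot\,,\overline{\lambda_j})\equiv 0$ for every $j$, and then for each fixed $\lambda$ the function $\nu\mapsto H(\lambda,\nu)$ vanishes on $\set{\overline{\lambda_j}}{j\in\N}$, which is a set of uniqueness for $\hol(\overline{\Omega})$ because $\{\lambda_j\}$ is one for $\hol(\Omega)$. \emph{(b)} The Vitali theorem for uniqueness sets: a locally uniformly bounded sequence of \emph{scalar}-valued holomorphic functions on an open set in $\c^N$ that converges pointwise on a set of uniqueness converges uniformly on compacta. (This follows at once from the classical Montel theorem: every subsequence has a subsequence converging uniformly on compacta, and any two such subsequential limits agree on the uniqueness set, hence coincide.)

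Now I would combine these. Because $u^k(\lambda_i)\to v_i$ and $u^l(\lambda_j)\to v_j$ in norm, $A^{k,l}(\lambda_i,\overline{\lambda_j})\to\ip{v_i}{v_j}$ as $k,l\to\infty$. Applying (b) to $\{A^{m,m}\}_m$ yields $A_\infty\in\hol(\Omega\times\overline{\Omega})$ with $A_\infty(\lambda_i,\overline{\lambda_j})=\ip{v_i}{v_j}$, and by (a) this $A_\infty$ is the unique holomorphic function with those values on $\Lambda'$. The key claim is then that $A^{k,l}\to A_\infty$ uniformly on compacta \emph{as $k$ and $l$ tend to infinity simultaneously}, i.e.\ for every compact $K\subseteq\Omega\times\overline{\Omega}$ and $\varepsilon>0$ there is $N$ with $\norm{A^{k,l}-A_\infty}_K<\varepsilon$ whenever $k,l\ge N$. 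If this failed there would be $\varepsilon>0$ and sequences $k_m,l_m\to\infty$ with $\norm{A^{k_m,l_m}-A_\infty}_K\ge\varepsilon$; but $\{A^{k_m,l_m}\}_m$ is locally uniformly bounded and converges on $\Lambda'$ to the values $\ip{v_i}{v_j}$, so by (b) it converges uniformly on $K$ to the unique holomorphic function with those values on $\Lambda'$, namely $A_\infty$, a contradiction.

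Finally, I would fix a compact $E\subseteq\Omega$ and apply the claim with $K=\set{(\mu,\overline{\mu})}{\mu\in E}$. For $\mu\in E$,
\[
\norm{u^k(\mu)-u^l(\mu)}^2 \= A^{k,k}(\mu,\overline{\mu})+A^{l,l}(\mu,\overline{\mu})-2\,\re A^{k,l}(\mu,\overline{\mu}),
\]
and since $A_\infty(\mu,\overline{\mu})=\lim_k\norm{u^k(\mu)}^2$ is real and nonnegative, the right-hand side differs from $0$ by at most $\norm{A^{k,k}-A_\infty}_K+\norm{A^{l,l}-A_\infty}_K+2\norm{A^{k,l}-A_\infty}_K$, which tends to $0$ as $k,l\to\infty$. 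Thus $\{u^k\}$ is uniformly Cauchy on $E$; as $E$ was arbitrary and $\hol_\h(\Omega)$ is complete, $\{u^k\}$ converges in $\hol_\h(\Omega)$. The one step requiring genuine care is the simultaneous double limit in the key claim --- a statement about the rectangle $\{k,l\ge N\}$ rather than the diagonal --- which is why I run the contradiction argument through arbitrary sequences $k_m,l_m\to\infty$; the rest is bookkeeping. One could instead imitate Arendt and Nikolski directly, passing to weak pointwise limits and using a norming argument together with the uniqueness-set hypothesis, but the scalarization above seems the most economical route in the Hilbert space setting.
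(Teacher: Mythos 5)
Your argument is correct. Note that the paper deliberately omits a proof of this proposition, pointing to Arendt--Nikolski and to the detailed proof it gives of the graded analogue, Proposition \ref{uniq.prop.20}; that proof scalarizes via $f^k_{\alpha,\beta}(\lambda)=\ip{u^k(\lambda)\alpha}{\beta}$, extracts a pointwise \emph{weak} limit $u(\lambda)$ by Montel plus the uniqueness hypothesis and the Riesz representation theorem, upgrades to pointwise norm convergence by showing $u^k(\mu)^*u^k(\lambda)\to u(\mu)^*u(\lambda)$ in $\her(\Omega)$ (weak convergence together with convergence of the norms gives strong convergence), and finally invokes the dense-sequence proposition. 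You bypass the weak limit, the Riesz step, and the appeal to Proposition \ref{int.prop.10} entirely: by working with the doubly indexed Gram functions $A^{k,l}$ and proving uniform convergence on compacta over the full rectangle $k,l\ge N$ rather than just the diagonal, the identity $\norm{u^k(\mu)-u^l(\mu)}^2=A^{k,k}+A^{l,l}-2\re A^{k,l}$ evaluated at $(\mu,\overline{\mu})$ shows directly that $\{u^k\}$ is uniformly Cauchy on compacta, and completeness of $\hol_\h(\Omega)$ finishes. The two proofs share the same engine --- scalar Montel/Vitali applied to hereditary-type functions of $(\lambda,\overline{\mu})$, together with the observation that $\{(\lambda_i,\overline{\lambda_j})\}$ is a set of uniqueness for the product domain --- but your Cauchy-sequence architecture avoids constructing the limit function in advance, at the price of having to control the off-diagonal terms $A^{k,l}$ with $k\ne l$, which the paper never needs. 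The two delicate points, the rectangle (double-limit) claim and the fact that $A_\infty(\mu,\overline{\mu})=\lim_k\norm{u^k(\mu)}^2$ is real so that the cross terms cancel, are both handled correctly, so no repair is needed.
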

Here is the graded version.

\begin{prop}\label{uniq.prop.20}
Assume that $\Omega$ is a finitely open set in $\c^d$,  $\{\lambda_i\}$ is a sequence in $\Omega$ (with $\lambda_i \in \bbm_{n_i}^d$ for each $i$) that is a set of uniqueness for $\hol_\h(\Omega)$,
 and $\h$ is a Hilbert space.
 If $\{u^k\}$ is sequence in $\hol_\h(\Omega)$ that is finitely locally uniformly bounded on $\Omega$, and for each fixed $i$ $\{u^k(\lambda_i)\}$ is a convergent sequence in $\h$, then $\{u^k\}$ converges in $\hol_\h(\Omega)$.
\end{prop}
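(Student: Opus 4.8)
The plan is to derive Proposition~\ref{uniq.prop.20} from Proposition~\ref{hol.prop.10} together with the noncommutative wandering Montel theorem, Theorem~\ref{hol.thm.10}. The organizing observation is that it suffices to prove the purely pointwise statement that $\{u^k(\mu)\}$ converges in $\b(\c^{n_\mu},\c^{n_\mu}\otimes\h)$ for \emph{every} $\mu\in\Omega$ (where $\mu\in\bbm_{n_\mu}^d$), not merely for $\mu$ in the uniqueness set. Granting that, one notes that each slice $\Omega_n$ is an open subset of $\c^{dn^2}$, hence separable, so $\Omega$ contains a countable subset $\{\mu_j\}$ that is dense in the finite topology; then $\{u^k(\mu_j)\}$ converges for every $j$, and Proposition~\ref{hol.prop.10}, applied with $\{\mu_j\}$ in place of $\{\lambda_i\}$, shows that $\{u^k\}$ converges in $\hol_\h(\Omega)$.

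To prove the pointwise statement, fix $\mu\in\Omega$ and suppose, for a contradiction, that $\{u^k(\mu)\}$ is not Cauchy. Then there are an $\varepsilon>0$ and indices $k_1<l_1<k_2<l_2<\cdots$ with $\norm{u^{k_m}(\mu)-u^{l_m}(\mu)}\ge\varepsilon$ for all $m$; put $w^m:=u^{k_m}-u^{l_m}\in\hol_\h(\Omega)$. As a difference of two finitely locally uniformly bounded sequences, $\{w^m\}$ is finitely locally uniformly bounded, and since each sequence $\{u^k(\lambda_i)\}$ converges (hence is Cauchy), we have $w^m(\lambda_i)\to 0$ as $m\to\infty$ for every fixed $i$. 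Apply Theorem~\ref{hol.thm.10} to $\{w^m\}$: there are unitaries $U^m$ on $\h$ and a subsequence $\{m_j\}$ with $U^{m_j}\ast w^{m_j}\to w$ in $\hol_\h(\Omega)$ for some $w\in\hol_\h(\Omega)$.

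The crucial point is that $\id_n\otimes U^m$ is unitary, so it preserves pointwise operator norms: $\norm{(U^m\ast w^m)(\lambda)}=\norm{w^m(\lambda)}$ whenever $\lambda\in\Omega_n$. Evaluating at $\lambda_i$ gives $\norm{(U^{m_j}\ast w^{m_j})(\lambda_i)}=\norm{w^{m_j}(\lambda_i)}\to 0$; since also $(U^{m_j}\ast w^{m_j})(\lambda_i)\to w(\lambda_i)$ (uniform convergence on the finitely compact set $\{\lambda_i\}$), we get $w(\lambda_i)=0$ for every $i$. Because $\{\lambda_i\}$ is a set of uniqueness for $\hol_\h(\Omega)$, this forces $w\equiv 0$. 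But evaluating at $\mu$ and using unitarity once more, $\norm{w(\mu)}=\lim_j\norm{(U^{m_j}\ast w^{m_j})(\mu)}=\lim_j\norm{w^{m_j}(\mu)}\ge\varepsilon>0$, contradicting $w\equiv 0$. Hence $\{u^k(\mu)\}$ is Cauchy, and as $\b(\c^{n_\mu},\c^{n_\mu}\otimes\h)$ is complete it converges; as explained above, Proposition~\ref{hol.prop.10} then finishes the proof.

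The only genuine difficulty is conceptual: one cannot extract a convergent subsequence of $\{u^k\}$ directly, since $\h$ is infinite-dimensional and Montel fails, so the argument must be routed through the differences $w^m=u^{k_m}-u^{l_m}$ and the wandering unitaries of Theorem~\ref{hol.thm.10}. What makes this work — and what is the heart of the proof — is that those unitaries act isometrically at each point, so that the vanishing of the limit on the uniqueness set survives the multiplication by them. The supporting facts (separability of $\Omega$ in the finite topology, completeness of $\b(\c^{n_\mu},\c^{n_\mu}\otimes\h)$, and the equivalence, noted before Proposition~\ref{uniq.prop.10}, of being a set of uniqueness for $\hol_\h(\Omega)$ versus for $\hol(\Omega)$) are all routine.
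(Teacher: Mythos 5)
Your argument is correct, but it is a genuinely different route from the one in the paper. The paper's proof is constructive: it fixes a level $\Omega_n$, applies the scalar Montel theorem to the functions $f^k_{\alpha,\beta}(\lambda)=\ip{u^k(\lambda)\alpha}{\beta}$ to extract a weak pointwise limit $u(\lambda)$ via the Riesz representation theorem, then shows (again by Montel plus uniqueness, now in $\her(\Omega_n)$) that $u^k(\mu)^*u^k(\lambda)\to u(\mu)^*u(\lambda)$, and finally upgrades weak convergence to norm convergence at each point using the Hilbert-space fact that weak convergence together with convergence of the Gram quantities implies strong convergence. You instead argue by contradiction through the wandering Montel theorem (Theorem~\ref{hol.thm.10}) applied to the differences $w^m=u^{k_m}-u^{l_m}$: the key observation that $\id_n\otimes U^m$ preserves the pointwise operator norm, so that vanishing of the limit on the uniqueness set survives the wandering unitaries while the lower bound $\norm{w^{m}(\mu)}\ge\varepsilon$ also survives, is exactly right, and the reduction to Proposition~\ref{hol.prop.10} via a countable finitely dense subset is sound. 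There is no circularity, since Theorem~\ref{hol.thm.10} is proved independently of this proposition. What your approach buys is brevity and a clean reuse of the machinery already built; what the paper's approach buys is an explicit identification of the limit function and, along the way, the convergence of the hereditary kernels $u^k(\mu)^*u^k(\lambda)$, which is of independent interest in Section~\ref{sece}. The only point worth making explicit in a final write-up is the standard extraction of the indices $k_1<l_1<k_2<l_2<\cdots$ from the negation of the Cauchy property, and the (routine) fact that singletons are finitely compact, so convergence in $\hol_\h(\Omega)$ implies pointwise convergence.
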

\begin{proof}
The theorem will follow if we can show that $\{u^k|\Omega_n\}$ is a convergent sequence for each $n$. Accordingly, fix $n$ and adopt the notation $H_n$ for the holomorphic $\b(\c^n,\c^n \otimes \h)$-valued functions defined on $\Omega_n$. Thus, $\{u^k|\Omega_n\}$ is a locally uniformly bounded sequence in $H_n$. Furthermore,  if $\{\eta_j\}$ is an enumeration of $\set{\lambda_i}{i \ge 1}\cap \Omega_n$, as $\{\lambda_i\}$ is a set of uniqueness for $\hol(\Omega)$, $\{\eta_i\}$ is a set of uniqueness for both $\hol(\Omega_n)$ and $H_n$. Finally, let $u^k(\eta_j) \to u_j$ as $k\to \infty$ for each $j$.

For fixed $\alpha \in \c^n$ and $\beta \in \c^n$, define $f^k_{\alpha,\beta} \in \hol(\Omega_n)$ by
\be\label{uniq.140}
f^k_{\alpha,\beta}(\lambda) = \ip{u^k(\lambda) \alpha}{\beta}_{\c^n \otimes \h},\qquad \lambda \in \Omega.
\ee
Noting that,
\be\label{uniq.150}
|f_{\alpha,\beta}^k(\lambda)|=|\ip{u^k(\lambda) \alpha}{\beta}|
\le \norm{u^k(\lambda)}\norm{\alpha}\norm{\beta},
\ee
it follows that  $\{f^k_{\alpha,\beta}\}$ is locally uniformly bounded on $\Omega_n$. Therefore by Montel's Theorem, $\{f_{\alpha,\beta}^k\}$ has compact closure in $\hol(\Omega_n)$.

We claim that $\{f_{\alpha,\beta}^k\}$ has a unique cluster point. For assume that $\{f^{k_r}_{\alpha,\beta}\}$ and $\{f^{k_s}_{\alpha,\beta}\}$ are subsequences of $\{f^{k}_{\alpha,\beta}\}$ with $\{f^{k_r}_{\alpha,\beta}\} \to f$ and $\{f^{k_s}_{\alpha,\beta}\} \to g$.  Then, as we assume for each $j$, $u^k(\eta_j)\to u_j$ as $k \to \infty$,
\begin{align*}
f(\eta_i)&=\lim_{r\to \infty} f^{k_r}_{\alpha,\beta}(\eta_i)\\
&=\lim_{r\to \infty}\ip{u^{k_r}(\eta_i)\alpha}{\beta}\\
&=\ip{u_i \alpha}{\beta}\\
&=\lim_{s\to \infty}\ip{u^{k_s}(\eta_i)\alpha}{\beta}\\
&=\lim_{s\to \infty}f^{k_s}_{\alpha,\beta}(\eta_i)\\
&=g(\eta_i).
\end{align*}
Hence, as $\{\eta_i\}$ is a set of uniqueness, $f=g$. Since $\{f_{\alpha,\beta}^k\}$ has a unique cluster point, we have shown that for each $\alpha \in \c^n$ and $\beta\in \c^n \otimes \h$, there exists $f_{\alpha,\beta} \in \hol(\Omega_n)$ such that
\be\label{uniq.160}
f_{\alpha,\beta}^k \to f_{\alpha,\beta} \text{ in } \hol(\Omega_n) \text{ as } k\to \infty.
\ee

Now fix $\lambda \in \Omega_n$ and define $L_\lambda$ by
 \be\label{uniq.170}
 L_\lambda(\alpha,\beta)=f_{\alpha,\beta} (\lambda),\qquad \alpha\in \c^n, \beta \in \c^n\otimes \h.
 \ee
 Observe that \eqref{uniq.140} and \eqref{uniq.160} imply that $L_\lambda$ is a sesqui-linear functional on $\c^n \times (\c^n \otimes \h)$. Furthermore, \eqref{uniq.150} and \eqref{uniq.160} imply that $L_\lambda$ is bounded. Therefore, by the Riesz Representation Theorem, there exists $u(\lambda) \in \b(\c^n,\c^n \otimes\h)$ such that
\[
\forall_{\alpha \in \c^n}\ \
\forall_{\beta \in \c^n \otimes \h}\ \ L_\lambda(\alpha,\beta) = \ip{u(\lambda)\alpha}{\beta},
\]
or equivalently,
\[
\forall_{\alpha\in \c^n}\ \ \forall_{\beta \in \c^n \otimes \h}\ \ \ip{u(\lambda)\alpha}{\beta}=f_{\alpha,\beta}(\lambda).
\]

The function $u$ constructed in the previous paragraph has the following properties: it is holomorphic,
\be\label{uniq.180}
\forall_{\lambda\in \Omega_n}\ \  u^k(\lambda) \to u(\lambda)\ \
 \text{ weakly in }\b(\c^n,\c^n\otimes\h) \ \ \text{ as }\ \  k\to \infty,
 \ee
 and
 \be\label{uniq.190}
\forall_{j}\ \   u^k(\eta_j) \to u(\eta_j)\ \text{ in norm in $\b(\c^n,\c^n\otimes \h)$\ \   as }\ \ k\to \infty.
 \ee

\begin{claim}\label{uniq.claim.20}
\[
u^k(\mu)^*u^k(\lambda) \to u(\mu)^*u(\lambda)\ \
\text{ in }\ \ \her(\Omega_n)\ \
\text{ as }\ \ k\to \infty.
\]
\end{claim}
To prove this claim, first note that as we are assuming $\{u^k\}$ is a locally uniformly bounded sequence in $\hol_\h(\Omega_n)$, $\{u^k(\mu)^*u^k(\lambda) \}$ is a locally uniformly bounded sequence in $\her(\Omega_n)$. Therefore, the claim follows from Montel's Theorem if we can show that
\be\label{uniq.200}
A(\lambda,\mu)=u(\mu)^*u(\lambda)
\ee
whenever $\{k_r\}$ is a sequence of indices such that
\be\label{uniq.210}
u^{k_r}(\mu)^*u^{k_r}(\lambda) \to A(\lambda,\mu)\ \
\text{ in }\ \ \her(\Omega_n)\ \
\text{ as }\ \ r\to \infty.
\ee
But if \eqref{uniq.210} holds, then \eqref{uniq.190} implies that for each independently chosen $i$ and $j$,
\[
A(\eta_j,\eta_i)=\lim_{r \to \infty}u^{k_r}(\eta_i)^*u^{k_r}(\eta_j)
=u^{k_r}(\eta_i)^*u^{k_r}(\eta_j).
\]
Since both sides of \eqref{uniq.200} are holomorphic in $\lambda$ and anti-holomorphic in $\mu$, and $\{\eta_i\}$ is a set of uniqueness, it follows that \eqref{uniq.200} holds for all $\lambda,\mu \in \Omega$. This completes the proof of Claim \ref{uniq.claim.20}.

Finally, fix $\lambda \in \Omega$. By \eqref{uniq.180}, $\{u^k(\lambda)\}$ converges weakly in $\b(\c^n,\c^n\otimes\h)$ to $u(\lambda)$ and by Claim \ref{uniq.claim.20},  $u^k(\lambda)^*u^k(\lambda) \to u(\lambda)^*u(\lambda)$. Therefore, $u^k(\lambda) \to u(\lambda)$ in norm in $\b(\c^n,\c^n\otimes \h)$. Since this holds for all $\lambda \in \Omega$, the proof of Proposition \ref{uniq.prop.20} may be completed by an application of Proposition \ref{hol.prop.10}.
\end{proof}
 \bibliography{../../references}
 \end{document}